\documentclass[a4paper, 12pt]{article}
\usepackage{fullpage}
\usepackage{amsmath, amsthm, amssymb, mathrsfs, graphicx, subfigure}
\usepackage{ifthen, url}
\usepackage[usenames]{color}

\usepackage[sort&compress]{natbib}
\bibpunct{(}{)}{;}{a}{}{,}

\theoremstyle{plain}
\newtheorem{thm}{Theorem}
\newtheorem{lem}{Lemma}
\newtheorem{prop}{Proposition}
\newtheorem{cor}{Corollary}

\theoremstyle{definition}
\newtheorem{defn}{Definition}

\theoremstyle{remark}
\newtheorem{remark}{Remark}
\newtheorem*{astep}{A-step}
\newtheorem*{pstep}{P-step}
\newtheorem*{cstep}{C-step}

\theoremstyle{remark}
\newtheorem*{toy}{Toy Example}

%\numberwithin{equation}{section}
%\numberwithin{figure}{section}
%\numberwithin{table}{section}

\newcommand{\A}{\mathscr{A}}
\newcommand{\X}{\mathscr{X}}

\newcommand{\U}{\mathscr{U}}

\newcommand{\J}{\mathcal{J}}

\newcommand{\KK}{\mathbb{K}}

\newcommand{\RR}{\mathbb{R}}
\newcommand{\UU}{\mathbb{U}}
\newcommand{\XX}{\mathbb{X}}

\newcommand{\nm}{{\sf N}}
\newcommand{\unif}{{\sf Unif}}
\newcommand{\tdist}{{\sf t}}
\newcommand{\fdist}{{\sf F}}
\newcommand{\chisq}{{\sf ChiSq}}

\newcommand{\M}{\mathcal{M}}

\newcommand{\prob}{\mathsf{P}}

\renewcommand{\S}{\mathcal{S}}
\renewcommand{\SS}{\mathbb{S}}
\newcommand{\bel}{\mathsf{bel}}
\newcommand{\pl}{\mathsf{pl}}
\newcommand{\mpl}{\mathsf{mpl}}

\newcommand{\T}{\mathcal{T}}
\newcommand{\TT}{\mathbb{T}}

\renewcommand{\phi}{\varphi}

\newcommand{\G}{\mathscr{G}}

\newcommand{\vtheta}{\boldsymbol{\theta}}

\newcommand{\stgeq}{\geq_{\text{st}}}

\newcommand{\steq}{=_{\text{st}}}

\begin{document}

%\shortcites{golub, tibshirani.saunders.rosset.zhu.knight.2005}

\title{Valid uncertainty quantification about the model in a linear regression setting}
\author{
Ryan Martin,\footnote{Department of Mathematics, Statistics, and Computer Science, University of Illinois at Chicago, email: {\tt rgmartin@uic.edu}} \; Huiping Xu,\footnote{Department of Biostatistics, The Richard M.~Fairbanks School of Public Health and School of Medicine, Indiana University, email: {\tt huipxu@iu.edu}} \; Zuoyi Zhang,\footnote{Regenstrief Institute, Inc., email: {\tt zyizhang@indiana.edu}} \; and Chuanhai Liu\footnote{Department of Statistics, Purdue University, email: {\tt chuanhai@purdue.edu}}
}
\date{\today}

\maketitle

\begin{abstract}
In scientific applications, there often are several competing models that could be fit to the observed data, so quantification of the model uncertainty is of fundamental importance.  In this paper, we develop an inferential model (IM) approach for simultaneously valid probabilistic inference over a collection of assertions of interest without requiring any  prior input.  Our construction guarantees that the approach is optimal in the sense that it is the most efficient among those which are valid.  Connections between the IM's simultaneous validity and post-selection inference are also made.  We apply the general results to obtain valid uncertainty quantification about the set of predictor variables to be included in a linear regression model.  

\medskip

\emph{Keywords and phrases:} Inferential model; post-selection inference; optimality; predictive random set; variable selection.
\end{abstract}

\section{Introduction}
\label{S:intro}

\subsection{Background}

Linear regression is one of the most widely used statistical tools in scientific applications.  Standard practice is to consider many predictor variables in hopes that only a few will identify themselves as being useful in explaining variation in the response variable.  As a result, there is substantial uncertainty about the underlying model.  The classical frequentist approach to deal with this problem is to use the data to help select a particular model, and there are a plethora of tools available, such as the Akaike information criterion \citep[AIC,][]{akaike1973}, the Bayesian information criterion \citep[BIC,][]{schwarz1978}, lasso \citep{tibshirani1996, tibshirani2011} and its variants, including adaptive lasso \citep{zou2006} and elastic net \citep{zou.hastie.2005}.  Significance tests, such as those in \citet{buhlmann2013} and \citet{lttt2014}, based on these or other methods have also been considered recently.  However, these frequentist methods provide no quantification of the uncertainty about the true model, i.e., they provide no way to conclude that one model being the true model is more plausible than another model being the true model.   

Bayesian methods, on the other hand, are able to produce a summary of the uncertainty among the candidate models; see, for example, \citet{clydegeorge2004} and \citet{ohara2009}.  Starting from a prior distribution on the set of possible models and a set of conditional priors on the model-specific parameters, a posterior distribution on the model space can be obtained via Markov chain Monte Carlo.  Despite the conceptual simplicity of this approach, there are practical difficulties to overcome, including prior specification; in particular, real prior information is rarely available and improper default priors cannot be used.  Without genuine prior information, the model uncertainty assessments coming from the corresponding posterior distribution are not guaranteed to be inferentially meaningful; see Section~\ref{SS:valid.example}.  Perhaps as a result of this lack of meaningfulness, the recent trend in the Bayesian literature on model selection is to de-emphasize uncertainty quantification, opting instead to report only a selected model.  Therefore, modern Bayesian methods are effectively just frequentist selection procedures and they provide no summaries of model uncertainty. 

Researchers have recently started to look beyond the classical frequentist and Bayesian schools for new solutions to challenging problems.  See, for example, recent work on generalized fiducial inference \citep{hannig.review.2015, lai.hannig.lee.2013}, confidence distributions \citep{xie.singh.2012}, and objective Bayes with default, reference, or data-dependent priors \citep{bergerbernardosun2009, fraser2011, fraser.reid.marras.yi.2010, martin.mess.walker.eb}.  Our focus in this paper is the new perspective from the \emph{inferential model} (IM) framework of \citet{imbasics}.  The key feature of this approach is that, for any assertion/hypothesis concerning the unknown parameter, it produces a probabilistic summary of the evidence in data for/against the truthfulness of that assertion.  In addition to this internal or subjective interpretation, these inferential summaries are valid, i.e., suitably calibrated, facilitating an external or objective interpretation.  The technical feature that distinguishes IMs from other approaches is the use of predictive random sets to produce a valid probabilistic summary of uncertainty about the parameter without a prior.

\subsection{Lack of validity: an illustration}
\label{SS:valid.example}

To further motivate our developments here, it will be helpful to elaborate on the claim above that the Bayesian approach does not, in general, provide an inferentially meaningful assessment of model uncertainty.  We consider a simple illustrative example.  Let $X=(X_1,X_2)$ be independent, with $X_i \sim \nm(\theta_i,1)$, $i=1,2$.  There are four models in this case, one for each zero and non-zero combination for $(\theta_1,\theta_2)$; here we will consider two ``model assertions,'' namely, $\{\theta_1=0, \theta_2=0\}$ and $\{\theta_1=0\}$.  A reasonable Bayesian model for this problem assigns a uniform prior over the four candidate models, and a $\nm(0,v)$ prior for the model-specific parameters, where the to-be-specified variance, $v > 0$, controls the degree of prior uncertainty.  The normal prior on the model-specific parameters is just a special case of the familiar g-prior in regression \citep[e.g.,][]{zellner1986}.  

To quantify the uncertainty in the model given the observed data $X=x$, we report the posterior probabilities for the model assertions mentioned above.  To be consistent with the notation and terminology in the rest of the paper, we will write these posterior probabilities as $\pl_x(A)$, where $A$ is one of the model assertions, and interpret this as the ``plausibility'' that model assertion $A$ is true based on the observed data $x=(x_1,x_2)$.  These posterior probabilities are proportional to sums of marginal likelihoods, i.e.,  
\begin{align*}
\pl_x(\theta_1=0, \theta_2=0) & \propto \nm(x_1 \mid 0, 1) \nm(x_2 \mid 0, 1) \\
\pl_x(\theta_1=0) & \propto \nm(x_1 \mid 0, 1) \nm(x_2 \mid 0, 1 + v) + \nm(x_1 \mid 0, 1) \nm(x_2 \mid 0, 1),
\end{align*}
where the normalizing constant is the sum of all four marginal likelihoods.  These are easy to compute, no sophisticated Monte Carlo methods are needed.  The question is if the corresponding uncertainty quantification is meaningful.  For example, the plausibility assigned to the true model should be large, but how large is large?  

Figure~\ref{fig:simple} shows the distribution functions (gray) of these plausibility values for the two assertions of interest, for different true parameter values, and for three different values of $v$.  It is natural to interpret probabilities on the $\unif(0,1)$ scale---e.g., 0.9 is a ``large'' value because it would be exceeded in only 10\% of cases---so we take this as a reference.  There are several key observations.  First, there is a strong dependence on the (arbitrary) choice of prior variance; in fact, depending on $v$, even if the model assertion is true, as in Panels~(a) and (c), it can happen that the posterior probability can never be ``large.'' Second, there is no meaningful way that these distributions can be related to the $\unif(0,1)$ reference typically used for interpretation.  Third, the way that shape of these distributions depends on the assertion and its truth/falsity is haphazard: the distributions in Panels~(a) and (b) look indistinguishable even though the model assertions are true and false, respectively, and the distributions in Panel~(c), based on a different but true assertion, have similar shapes; in Panel~(d), the assertion is false, and the distributions look entirely different from the others.  The take-away message is that the Bayesian posterior model probabilities need not have a meaningful interpretation in terms of quantifying uncertainty about the underlying model.  The example here is relatively simple, so we can expect that similar issues will occur more generally.  

As a preview, Figure~\ref{fig:simple} also shows the distribution function of the model plausibilities obtained from the optimal IM described in Section~\ref{S:optimal.reg}.  The key observation is that the IM-based plausibility is valid in the sense that the comparisons with the $\unif(0,1)$ reference are consistent with our intuition in each case: when the asserted model is true (resp.~false), the plausibility is stochastically no smaller (resp.~no larger) than $\unif(0,1)$.  This important difference compared to the Bayesian output is easiest to see in Panel~(c).  To our knowledge, only the IM approach proposed here provides valid model uncertainty quantification, so this is an important contribution.  %{\color{red} More comments here and/or in Section~4...?}

\begin{figure}[t]
\begin{center}
\subfigure[Truth: $\theta_1=\theta_2=0$]{\scalebox{0.45}{\includegraphics{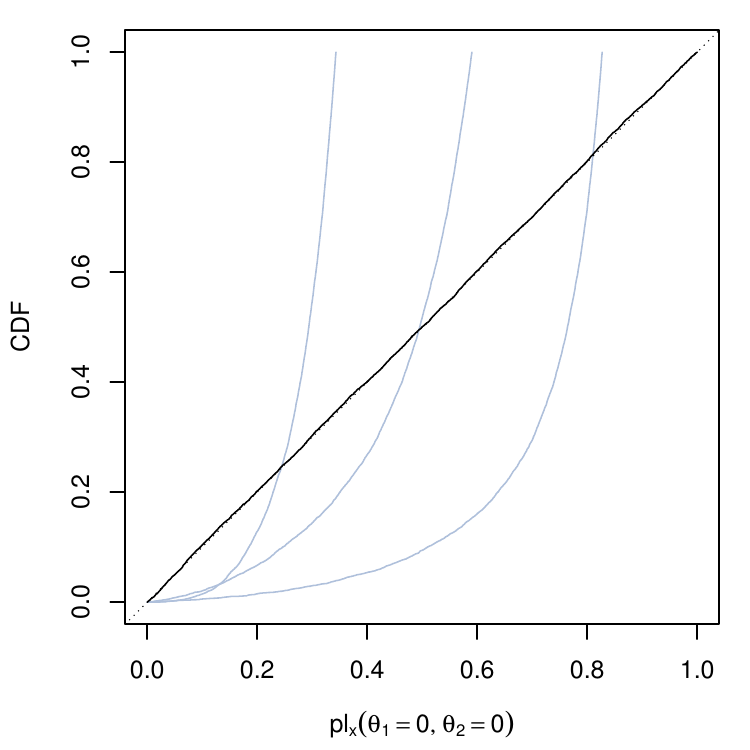}}}
\subfigure[Truth: $\theta_1=0$, $\theta_2=1$]{\scalebox{0.45}{\includegraphics{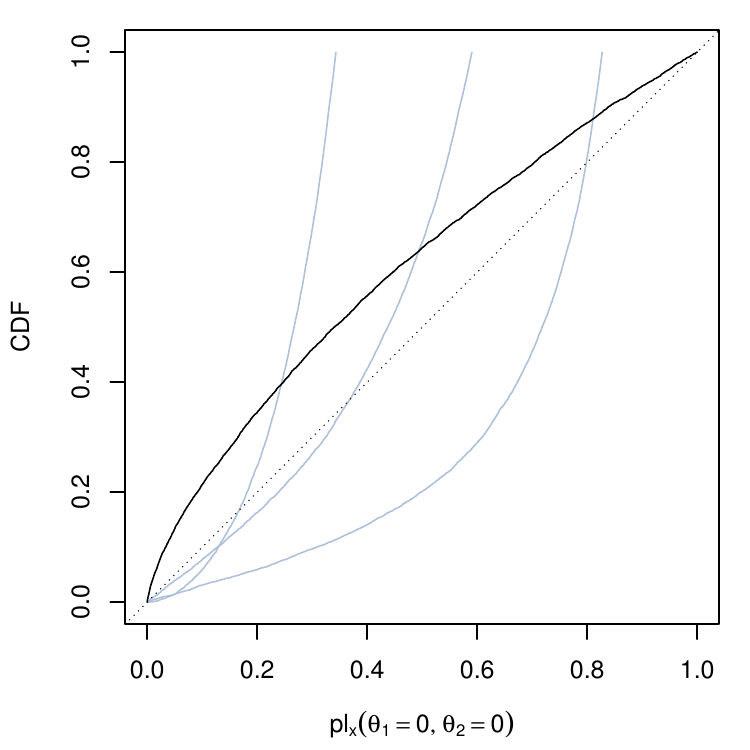}}}
\subfigure[Truth: $\theta_1=0$, $\theta_2=1$]{\scalebox{0.45}{\includegraphics{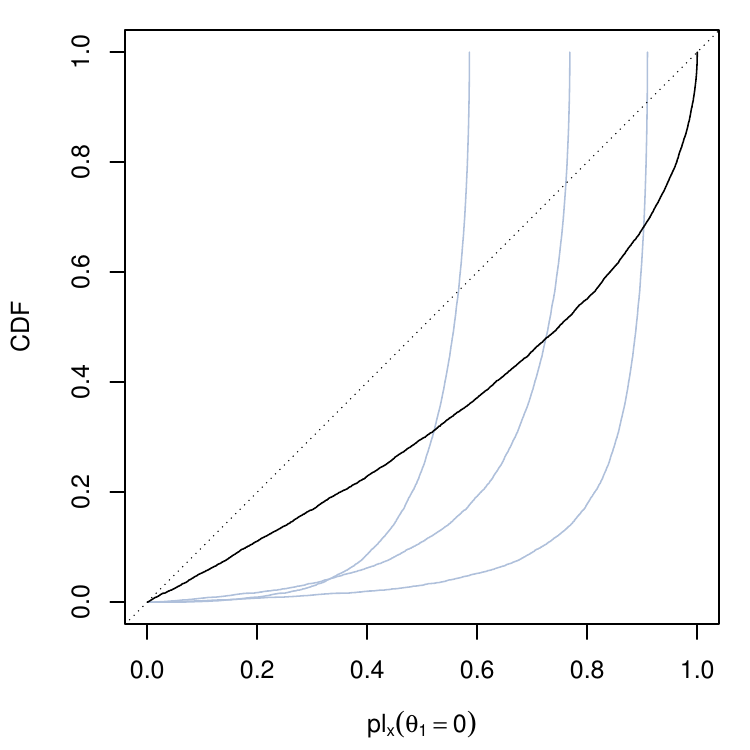}}}
\subfigure[Truth: $\theta_1=2$, $\theta_2=1$]{\scalebox{0.45}{\includegraphics{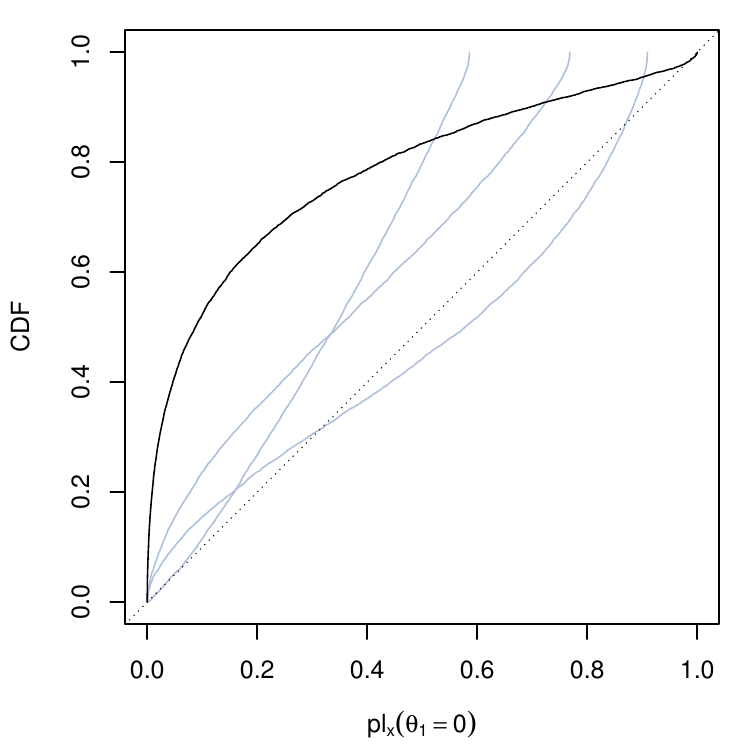}}}
\end{center}
\caption{Plots of the distribution function (CDF) of $\pl_X(\cdot)$ for different model assertions and several configurations of the true $(\theta_1,\theta_2)$.  Gray lines in each panel correspond to the Bayesian plausibility with prior variance $v=1,10,100$; black line correspond to the IM plausibility described in Section~\ref{S:optimal.reg}; and the dotted line is the $\unif(0,1)$ reference.}
\label{fig:simple}
\end{figure}

\subsection{Main contributions}

Early work on IMs for regression is presented in \citet{imreg}.  What is missing there is a rigorous treatment of model uncertainty quantification, and the present paper aims to fill this important gap.  In particular, the main contributions here are two-fold.

First, we present new and general results on the construction of optimal predictive random sets for inference problems that involve multiple simultaneous assertions.  Previous work on optimality \citep[e.g.,][]{imbasics} focused on one assertion at a time but many important examples, such as the regression problem considered here, involve simultaneous consideration of multiple assertions.  Our strategy is to decompose the set of assertions under consideration into their basic building blocks, identify the optimal predictive random sets for these building block assertions, and then combine these individually optimal predictive random sets in a particular way to obtain the simultaneously optimal predictive random set for the collection of assertions.  

Second, we apply these new results to construct an optimal IM for the important problem of model uncertainty quantification in regression.  There we find that a hyper-cube predictive random set is optimal.  The particular construction ensures that the IM's output is provably valid in the sense illustrated in Figure~\ref{fig:simple}.  To our knowledge, this is the first result on valid uncertainty quantification about the model in a regression context.  As part of our development, we make an important connection between the IM's focus on simultaneous validity and modern attempts at post-selection inference, especially, that in \citet{bbbzz2013}.

\subsection{Outline of the paper}

In Section~\ref{S:imreg} we review the basics of the IM approach in the context of the regression problem.  Section~\ref{S:optimal} introduces the concept of efficiency and develops some general theory concerning the shape of the optimal predictive random sets for successively more general collections of assertions, concluding with a result (Theorem~\ref{thm:balance.optimal}) that characterizes the IM for valid and optimal simultaneous inference over a class of so-called complex assertions.  This result is specialized for model uncertainty quantification in regression in Section~\ref{S:optimal.reg}; there we draw connections between notion of simultaneous validity and post-selection inference, and show how the IM's validity property can be used to develop a variable selection procedure with provable control on certain frequentist error rates.   The paper concludes with a short discussion in Section~\ref{S:discuss}; proofs are deferred to the Appendix and some additional technical details are presented in the {Supplementary Material}.

\section{IMs for regression}
\label{S:imreg}

%\subsection{Motivation}
%\label{SS:overview}

%The IM developments were motivated by the idea that statistical inference is more than just designing procedures having certain frequentist properties.  Instead, the goal of statistical inference should be to provide meaningful summaries of the evidence available in data about the truthfulness of various assertions of interest.  Since the summaries are required to be meaningful, it is then possible, if desired, to derive procedures with good properties.  The IM approach, described below, provides such valid probabilistic summaries for any assertion/hypothesis under minimal conditions.  Our focus here is on how to specify the ingredients for constructing the IM so that the summaries are also efficient.  From these valid and efficient summaries, it will be easy to derive procedures with good properties.  %For more on IM foundations, see \citet{sts.discuss.2014} and \citet{liu.martin.wire}.  

\subsection{Baseline association}

To set notation, consider the following specification of the linear regression model:
\begin{equation}
\label{eq:linreg}
Y = X \beta + \sigma Z, 
\end{equation}
where $Y=(Y_1,\ldots,Y_n)^\top$ is a $n$-vector of response variables, $X$ is an $n \times p$ matrix of predictor variables, $\beta=(\beta_1,\ldots,\beta_p)^\top$ is a $p$-vector of regression coefficients, $\sigma^2$ is the residual variance, and $Z=(Z_1,\ldots,Z_n)^\top$ is a $n$-vector of standard Gaussian errors, i.e., $Z \sim \nm_n(0, I_n)$, a $n$-dimensional Gaussian distribution with mean zero and identity covariance matrix.  We assume that $p < n$ and that $X$ is fixed of full rank.   Without loss of generality, we will assume that $Y$ and the columns of $X$ have been centered, which effectively marginalizes out the nuisance intercept parameter.      

We refer to \eqref{eq:linreg} as a baseline association between observable data $Y$ (and $X$), unknown parameters $(\beta, \sigma)$, and unobservable auxiliary variables $Z$.  The importance of such an association can be seen as follows: if $Z$ could be observed, then we could exactly solve \eqref{eq:linreg} for $(\beta,\sigma)$, leading to the ``best possible inference.''  Since $Z$ cannot be observed, a natural idea is to accurately predict the unobservable $Z$; see Section~\ref{SS:three.step}.  It is clear that accurately predicting a high-dimensional quantity is more difficult than predicting a lower-dimensional quantity, so it is advantageous to reduce the dimension of the auxiliary variable as much as possible before carrying out the prediction step.

\subsection{Auxiliary variable dimension reduction}
\label{SS:dim.red}

\citet{imcond, immarg} discuss two distinct approaches for reducing the dimension of the auxiliary variable.  The first is an approach based on conditioning, which is particularly useful in cases where the auxiliary variable is of higher dimension than the parameter.  For example, in our regression problem, $Z$ is $n$-dimensional while $(\beta,\sigma)$ is $(p+1)$-dimensional, and $p+1 \leq n$.  Since the regression problem admits a $(p+1)$-dimensional minimal sufficient statistic, the IM dimension reduction is straightforward.  Let $\hat\beta$ be the least-squares estimator of $\beta$, and $\hat\sigma$ the corresponding estimator of $\sigma$ based on the residual sum of squares.  Then $(\hat\beta, \hat\sigma)$ is a minimal sufficient statistic, and we can identify a lower-dimensional (conditional) association:
\begin{equation}
\label{eq:conditional.association}
\hat\beta = \beta + \sigma V_1 \quad \text{and} \quad \hat\sigma = \sigma V_2 
\end{equation}
where $V=(V_1,V_2)$ satisfies $V_1 \sim \nm_p(0,M)$, $(n-p-1)V_2^2 \sim \chisq(n-p-1)$, independent, and $M=(X^\top X)^{-1}$.  The key point is that we have replaced the $n$-dimensional auxiliary variable $Z$ with a $(p+1)$-dimensional auxiliary variable $V$.    

Here, as is typical in regression applications, $\sigma^2$ is a nuisance parameter.  For such cases, it is possible to further reduce the dimension of the auxiliary variable via marginalization.  Note that the association \eqref{eq:conditional.association} can be rewritten as 
\[ \hat\beta = \beta + \hat\sigma (V_1 / V_2) \quad \text{and} \quad \hat\sigma = \sigma V_2. \]
The general theory in \citet{immarg} says that the second equation above---the one that is free of $\beta$---can be ignored.  This leaves a marginal association involving a $p$-dimensional auxiliary variable $W=V_1/V_2$.  That is,  
\begin{equation}
\label{eq:marginal.association.1}
\hat\beta = \beta + \hat\sigma W, \qquad W \sim \tdist_p(0,M;n-p-1),
\end{equation}
where the auxiliary variable distribution is a $p$-dimensional Student-t, with $n-p-1$ degrees of freedom, centered at the origin, with scale matrix $M=(X^\top X)^{-1}$.  Again, the important point is that the $(p+1)$-dimensional auxiliary variable in \eqref{eq:conditional.association} has been replaced by a $p$-dimensional auxiliary variable $U$.  No further dimension reduction is possible.   

It will be convenient to rewrite association \eqref{eq:marginal.association.1} once more.  Let $D$ be a diagonal $p \times p$ matrix with the same diagonal as $M$.  Then consider the association 
\begin{equation}
\label{eq:marginal.association}
\hat\theta = \theta + \hat\sigma U, \quad U \sim \prob_U = \tdist_p(0, L, n-p-1),
\end{equation}
where $\theta = D^{-1/2} \beta$, $\hat\theta = D^{-1/2} \hat\beta$, $U = D^{-1/2} W$, and $L = D^{-1/2} M D^{-1/2}$.  Note, in particular, that $\theta_j=0$ if and only if $\beta_j=0$, so the question about which variables are included in the model has not been changed; also, $L$ has all ones on the diagonal.

\subsection{Predictive random sets}
\label{SS:prs}

The interpretation of the association \eqref{eq:marginal.association} from the standpoint of inference is that there is a particular value, say $u^\star$, for which that equation holds for the true $\theta$.  How we describe our uncertainty about this value $u^\star$ determines our uncertainty assessments about $\theta$, given the observed data.  What distinguishes the IM approach from Fisher's fiducial approach and its variants is the use of a random set, $\S$, to summarize uncertainty about $u^\star$.  We refer to $\S$ as a \emph{predictive random set}.  The key to the IM approach's success is an appropriate choice of the predictive random set.  

There is a rich mathematical theory of random sets, summarized nicely in \citet{molchanov2005}.  Here, for conceptual understanding, it will suffice to consider a discrete setting.  Let $\SS$ be a finite collection of subsets of a space $\UU$, in our case, $\RR^p$.  This collection of sets will serve as the support for the random set $\S$; the individual sets in the collection $\SS$ are called \emph{focal elements}.  Now, define a random set $\S$, supported on $\SS$, simply by assigning probabilities to each of the focal elements; this characterizes the distribution, $\prob_\S$, of $\S$.  Of course, defining a random set in a non-discrete case requires more care and, in particular, a notion of measurability of set-valued functions.  The {Supplementary Material} provides some relevant technical background and explains how the random sets appearing in our IM development satisfy the required conditions automatically.  

One is free to describe their uncertainty about the unobserved value $u^\star$ with any kind of predictive random set.  However, the corresponding uncertainty assessments/inference about $\theta$ would be meaningful only if the two distributions in question, namely, $\prob_U$ and $\prob_\S$, have a suitable link.  The following definition provides such a link.    

\begin{defn}
\label{def:admissible}
A predictive random set $\S$, with distribution $\prob_\S$, is called \emph{admissible} if the following two conditions hold:
\begin{itemize}
\item the support $\SS$, which is assumed to contain $\varnothing$ and $\RR^p$, is \emph{nested}, that is, for any two focal elements $S$ and $S'$ in $\SS$, either $S \subseteq S'$ or $S' \subseteq S$, and 
\vspace{-2mm}
\item the distribution $\prob_\S$ of $\S$ satisfies 
\begin{equation}
\label{eq:natural}
\prob_\S\{\S \subset K\} = \sup_{S \in \SS: S \subset K} \prob_U(S), \quad K \subseteq \UU. 
\end{equation}
\end{itemize}
\end{defn}

\citet{imbasics} showed that, by choosing a predictive random set $\S$ that is admissible in the sense above, the corresponding IM output is valid, i.e., suitably calibrated for scientific inference; see Definition~\ref{def:valid} below.  Existence of a distribution $\prob_\S$ that satisfies \eqref{eq:natural} is discussed in the {Supplementary Material}.

\subsection{IM construction}
\label{SS:three.step}

First, the model provides a link between the observable data, the unknown parameters, and the unobservable auxiliary variables.  Next, uncertainty about the unobserved value of the auxiliary variable, given the observed data, is encoded in a predictive random set.  Finally, this uncertainty about the auxiliary variable gets passed through the association, yielding a corresponding uncertainty assessment about the parameter.  For the dimension-reduced model in \eqref{eq:marginal.association}, the formal three-step IM construction is as follows.  %Here, recall, that the design matrix $X$ is fixed. 

\begin{astep}
Associate data $Y=y$ with unknown parameters $\theta$ and auxiliary variable $U$ as in \eqref{eq:marginal.association.1}.  This defines a set---in this case, a singleton set---of parameter values indexed by particular values of $U$, namely, 
\begin{equation}
\label{eq:marginal.focal}
\Theta_y(u) = \{\theta: \hat\theta = \theta + \hat\sigma u\}, \quad u \in \RR^p.
\end{equation}
\end{astep}

\begin{pstep}
Predict the unobservable auxiliary variable $U$ with an admissible predictive random set $\S$ with distribution $\prob_\S$.  
\end{pstep}

\begin{cstep}
Combine the results of the A- and P-steps to get 
\[ \Theta_y(\S) = \bigcup_{u \in \S} \Theta_y(u). \]
\end{cstep}

Since $\S$ is a random set, so is $\Theta_y(\S)$, and probabilities associated with this random set are used to summarize uncertainty about the unknown parameter.  If $\S$ is such that $\Theta_y(\S) \neq \varnothing$ with $\prob_\S$-probability~1 for all $y$---this is the only case we need to consider here, but see \citet{leafliu2012}---then, for a given assertion/hypothesis $A$ about the unknown parameter $\theta$, we calculate the belief function
\begin{equation}
\label{eq:belief}
\bel_y(A;\S) = \prob_\S\{\Theta_y(\S) \subset A\}, 
\end{equation}
the $\prob_\S$-probability that $\Theta_y(\S)$ completely agrees with the assertion $A$.  The other important quantity is the plausibility function 
%\begin{equation}
%\label{eq:plausibility0}
\[ \pl_y(A;\S) = 1 - \bel_y(A^c; \S) = \prob_\S\{\Theta_y(\S) \not\subset A^c\}, \]
%\end{equation}
the $\prob_\S$-probability that $\Theta_y(\S)$ at least partially agrees with $A$.  For given $Y=y$, the output of the IM construction is the pair of functions $(\bel, \pl)_y$.

\subsection{IM validity}
\label{SS:valid}

The output of the IM construction is a belief function, but in what sense is this belief function meaningful for inference?  One important property the belief function should have is that it does not tend to assign high beliefs to assertions about $\theta$ which are false.  The validity property makes this precise.      

\begin{defn}
\label{def:valid}
An IM is called \emph{valid} if, for any assertion $A$ about $\theta$, the corresponding belief function satisfies 
\begin{equation}
\label{eq:bel.valid}
\sup_{(\theta,\sigma): \theta \not\in A} \prob_{Y|(\theta,\sigma)}\{\bel_Y(A;\S) \geq 1-\alpha\} \leq \alpha, \quad \forall \; \alpha \in (0,1). 
\end{equation}
In other words, the IM is valid if, for any assertion $A$, $\bel_Y(A;\S)$ is stochastically no larger than $\unif(0,1)$ as a function of $Y$ when $A$ is false.  %In light of \eqref{eq:plausibility}, validity can also be defined in terms of the plausibility function \citep[][Sec.~3.3]{imbasics}.  
\end{defn}

\citet{imbasics} showed that admissibility of $\S$ is sufficient for validity of the corresponding IM.  The only downside is that there are many predictive random sets that give valid IMs.  In that case, it is natural to look for a ``best'' one; see Definition~\ref{def:optimal}.  %Our focus here is the development of optimal predictive random sets for simultaneous assertions with an application to the challenging problem of model uncertainty quantification. 

\section{General optimality results}
\label{S:optimal}

\subsection{Setup}

Consider a general setup with association $X=a(\theta,U)$, where $X \in \XX$ is the observable data, $\theta \in \Theta$ is the unknown parameter of interest, and $U$ is an unobservable auxiliary variable, with distribution $\prob_U$ defined on a space $\UU$; precise measure-theoretical statement of our assumptions is given in the {Supplementary Material}.  The corresponding sampling model is denoted by $\prob_{X|\theta}$.  We assume that the predictive random set is admissible, so that the IM's belief function $\bel_x$ is valid in the sense of Definition~\ref{def:valid}.  Then, for a given assertion $A$ about $\theta$, an optimal predictive random set $\S$, if it exists, makes $\bel_X(A;\S)$ stochastically as large as possible as a function of $X \sim \prob_{X|\theta}$, for all $\theta \in A$ \citep{imbasics}.  The following definition makes this formal.  

\begin{defn}
\label{def:optimal}
For a given association and assertion $A$, if $\S$ and $\S'$ are two valid predictive random sets, then we say that $\S$ \emph{is at least as efficient as} $\S'$ (with respect to $A$) if 
\begin{equation}
\label{eq:dominate}
\text{$\bel_X(A;\S) \stgeq \bel_X(A;\S')$, as a function of $X \sim \prob_{X|\theta}$, $\theta \in A$}. 
\end{equation}
A predictive random set $\S$ is called optimal (with respect to $A$) if \eqref{eq:dominate} holds for all valid predictive random sets $\S'$.  
\end{defn}

%The name ``admissible'' in Definition~\ref{def:admissible} actually comes from the admissibility result in \citet[][Theorem~3]{imbasics}: given any predictive random set $\S'$, there exists an $\S$ that satisfies the conditions in Definition~\ref{def:admissible} and is at least as efficient as $\S'$.  

Given $\Theta_x(u) = \{\theta: x=a(\theta,u)\}$ from the A-step, define the \emph{a-event} as 
\begin{equation}
\label{eq:aevent}
\UU_A(x) = \{u \in \UU: \Theta_x(u) \subseteq A\}.% = \{u \in \UU: \text{$x = a(\theta,u)$ for some $\theta \in A$}\}.   
\end{equation}
This is just the set of all $u$ that support the truthfulness of the assertion ``$\theta \in A$'' for a given $x$.  We require that $\UU_A(x)$ be appropriately measurable, and the precise conditions are given in the {Supplementary Material}.   The a-events will be crucial in the construction of the focal elements of the optimal predictive random set.  

As discussed in Section~\ref{S:intro}, towards developing an optimal predictive random set for a collection of assertions, simultaneously, we consider breaking these assertions down into their basic building blocks.  Starting from the most basic building block, a simple assertion, we characterize the optimal predictive random sets and show how these can be combined to create optimal predictive random sets for more general kinds of assertions.  Since the distribution of an admissible predictive random set is determined by \eqref{eq:natural}, our focus throughout is on the construction of the focal elements.

\subsection{Simple assertions}
\label{SS:simple}

An assertion $A$ is \emph{simple} if the collection of a-events $\UU_A \equiv \{\UU_A(x): x \in \XX\}$ is nested, i.e., for any pair $x,x' \in \XX$, we have either $\UU_A(x) \subseteq \UU_A(x')$ or $\UU_A(x) \supseteq \UU_A(x')$; otherwise, the assertion is called \emph{complex}.  The following results shows that, if $A$ is simple, then the optimal predictive random set (relative to $A$), as in Definition~\ref{def:optimal}, readily obtains by taking the support $\SS=\UU_A$ and the distribution $\prob_\S$ to satisfy \eqref{eq:natural}.   

\begin{thm}
\label{thm:optimal.simple}
For a simple assertion $A$, the optimal predictive random set $\S$ with respect to $A$ is supported on the nested collection $\UU_A$ with distribution satisfying \eqref{eq:natural}.  
\end{thm}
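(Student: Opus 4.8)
The plan is to recast the whole comparison in terms of the a-event \eqref{eq:aevent} and then lean on the admissibility-reduction result already cited after Definition~\ref{def:optimal}. The first step is the key reformulation of the belief function. For any $x$ and any predictive random set $\S$, the C-step union $\Theta_x(\S)=\bigcup_{u\in\S}\Theta_x(u)$ satisfies
\[ \Theta_x(\S)\subseteq A \iff \S\subseteq\UU_A(x), \]
because $\Theta_x(\S)\subseteq A$ exactly when every $u\in\S$ has $\Theta_x(u)\subseteq A$, i.e.\ every $u\in\S$ lies in $\UU_A(x)$. Hence $\bel_x(A;\S)=\prob_\S\{\S\subseteq\UU_A(x)\}$, and the problem becomes one about how much $\prob_\S$-mass a random set can place inside the fixed a-event $\UU_A(x)$.

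Next I would argue that it suffices to compare the proposed $\S$ only against \emph{admissible} competitors. By the result of \citet[Theorem~3]{imbasics} recalled after Definition~\ref{def:optimal}, every valid $\S'$ is dominated with respect to $A$ by some admissible $\S''$; since ``at least as efficient'' is transitive, it is enough to show that the proposed random set dominates every admissible competitor, and domination over all valid competitors follows.

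For the computation itself, take any admissible $\S'$ with nested support $\SS'$ and put $K=\UU_A(x)$ into the natural-distribution identity \eqref{eq:natural}; this gives
\[ \bel_x(A;\S')=\sup_{S\in\SS':\,S\subseteq\UU_A(x)}\prob_U(S)\le\prob_U(\UU_A(x)), \]
the inequality being mere monotonicity of $\prob_U$. Now let $\S$ be supported on the collection $\UU_A$, augmented by $\varnothing$ and $\UU$ so that Definition~\ref{def:admissible}(i) is met; this support is nested \emph{precisely because} $A$ is simple, and with distribution satisfying \eqref{eq:natural} it is admissible, hence valid. Since $\UU_A(x)$ is itself a focal element, the supremum is attained, so $\bel_x(A;\S)=\prob_U(\UU_A(x))$. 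Combining the two displays yields $\bel_x(A;\S)\ge\bel_x(A;\S')$ for \emph{every} $x$; pointwise domination upgrades immediately to the stochastic domination $\bel_X(A;\S)\stgeq\bel_X(A;\S')$ of Definition~\ref{def:optimal}, under any sampling distribution and in particular for $\theta\in A$.

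I expect the main obstacle to be not the probabilistic comparison, which collapses to a one-line monotonicity bound once \eqref{eq:natural} is invoked, but rather the two structural facts that make the bound tight: that $\UU_A$ is nested, so that it is a legitimate admissible support (this is exactly the hypothesis that $A$ is simple), and that each $\UU_A(x)$ is a genuine, measurable focal element attaining the supremum. The measurability of $x\mapsto\UU_A(x)$ and the existence of a random set with distribution \eqref{eq:natural} are the points that must be handled with the machinery deferred to the appendix.
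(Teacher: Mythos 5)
Your proof is correct and follows essentially the same route as the paper's: bound the belief function of any competitor by $\prob_U\{\UU_A(x)\}$, then show the random set supported on $\UU_A$ with distribution \eqref{eq:natural} attains this bound for every $x$. The only difference is that you derive the upper bound directly from \eqref{eq:natural} plus monotonicity and make the reduction from valid to admissible competitors explicit, whereas the paper delegates both steps to cited results (Proposition~1 and Theorem~3 of \citet{imbasics}).
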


\begin{proof}
See Appendix~\ref{proof:optimal.simple}. 
\end{proof}

As an example, consider $X \sim \nm(\theta,1)$, with association $X=\theta + U$, $U \sim \nm(0,1)$.  The assertion $A=\{\theta > 0\}$ is simple.  To see this, write the a-event:
\[ \UU_A(x) = \{u: \text{$x=\theta+u$ for some $\theta > 0$}\} = (-\infty, x). \]
Then, clearly, if $x < x'$, then $\UU_A(x) \subset \UU_A(x')$; so, the a-events are nested.  The belief function based on the optimal predictive random set $\S$ in Theorem~\ref{thm:optimal.simple} is 
\[ \bel_x(A;\S) = \prob_\S\{\S \subset \UU_x(A)\} = \prob_U\{\UU_A(x)\} = \Phi(x), \]
where $\Phi$ is the standard normal distribution function.  Given the optimality result, it is not a coincidence that $\pl_x(A^c;\S) = 1-\Phi(x)$ is the p-value for the uniformly most powerful test of $H_0: \theta \leq 0$ versus $H_1: \theta > 0$; see \citet{impval}.  

Unfortunately, simple assertions are insufficient for practical applications.  For example, in the normal mean example above, we might be interested in the assertion $A=\{\theta \neq 0\}$ or, more generally, we might be interested in several assertions simultaneously, each of which might be simple or not.  The regression application features all of these cases.  (More generally, even if there is only one assertion $A$, IM efficiency considerations require that one think about $A$ and $A^c$ simultaneously, so an understanding of how to handle multiple assertions is fundamental.)  The next two sections discuss how to extend the basic optimality results to these important more general cases.

\subsection{Complex assertions}
\label{SS:complex}

To motivate our developments, reconsider the normal mean example above.  This time, suppose we are interested in the assertion $A=\{\theta \neq 0\}$.  This is a union of two disjoint simple assertions, namely, $A_1=\{\theta < 0\}$ and $A_2=\{\theta > 0\}$.  The optimal predictive random set with respect to $A_1$ is efficient for $A_1$ but very inefficient for $A_2$; likewise, for the optimal predictive random set with respect to $A_2$.  Since we are interested in $A_1 \cup A_2$ we require a predictive random set that is as efficient as possible for both $A_1$ and $A_2$.  For the general case of a complex assertion written as a union of two simple assertions, an intuitively reasonable strategy is to consider a predictive random set whose focal elements are intersections of the focal elements of the optimal predictive random sets with respect to the two individual simple assertions.  The following result says that the optimal predictive random set for the complex assertion must be of this form.  (The results below extend to more than two simple component assertions, but two-component assertions are general enough for our purposes here.)  

\begin{thm}
\label{thm:intersection}
Let $A=A_1 \cup A_2$ be a complex assertion, where $A_1$ and $A_2$ are simple assertions.  Let $\S_1$ and $\S_2$ be the optimal predictive random sets for $A_1$ and $A_2$, respectively, as in Theorem~\ref{thm:optimal.simple}.  For any predictive random set $\T$, there exists an $\S$, whose focal elements are intersections of the focal elements of $\S_1$ and $\S_2$, such that $\S$ is at least as efficient as $\T$ with respect to $A$.
\end{thm}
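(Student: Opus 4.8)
The plan is to establish the distributional requirement \eqref{eq:dominate} through the stronger pointwise route: I will construct $\S$ so that $\bel_x(A;\S) \ge \bel_x(A;\T)$ holds for \emph{every} $x \in \XX$, since pointwise domination of the belief function immediately yields the stochastic domination $\bel_X(A;\S) \stgeq \bel_X(A;\T)$ under each $\prob_{X|\theta}$, $\theta \in A$. Because efficiency is only compared across valid predictive random sets, I take $\T$ to be valid, and then, since the admissibility result quoted after Definition~\ref{def:optimal} supplies an admissible predictive random set at least as efficient as $\T$ with respect to $A$, it suffices to dominate such an admissible $\T$; so I may assume $\T$ is supported on a nested family $\TT$ with the natural distribution \eqref{eq:natural}. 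The first concrete step is to record the decomposition of the relevant a-event: since each $\Theta_x(u)$ is a singleton, $\Theta_x(u) \subseteq A_1 \cup A_2$ holds iff $\Theta_x(u) \subseteq A_1$ or $\Theta_x(u) \subseteq A_2$, so that $\UU_A(x) = \UU_{A_1}(x) \cup \UU_{A_2}(x)$ and $\bel_x(A;\T) = \prob_\T\{\T \subseteq \UU_{A_1}(x) \cup \UU_{A_2}(x)\}$, and likewise for $\S$.

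The construction reshapes each focal element of $\T$ into an intersection of a focal element of $\S_1$ with one of $\S_2$. The geometric fact driving this is that, by nestedness of the families $\UU_{A_1}$ and $\UU_{A_2}$, for any $S_1 \in \UU_{A_1}$ and $S_2 \in \UU_{A_2}$ one has $S_1 \cap S_2 \subseteq \UU_A(x)$ as soon as either $S_1 \subseteq \UU_{A_1}(x)$ or $S_2 \subseteq \UU_{A_2}(x)$. Thus the agreement region $\{x : S_1 \cap S_2 \subseteq \UU_A(x)\}$ of an intersection focal element is controlled separately by how $S_1$ sits in the $\UU_{A_1}$-family and how $S_2$ sits in the $\UU_{A_2}$-family. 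I will therefore associate with each $T \in \TT$ its reach along $\UU_{A_1}$ and its reach along $\UU_{A_2}$, replace $T$ by the corresponding intersection $S_1(T) \cap S_2(T)$, and transport the natural distribution of $\T$ onto these intersections. Monotonicity of the two reaches along the nested family $\TT$ keeps the reshaped focal elements nested, so the resulting $\S$ is again admissible, hence valid, and its focal elements are by construction intersections of focal elements of $\S_1$ and $\S_2$.

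It then remains to check that reshaping does not lower the belief, namely that the event $\{T \subseteq \UU_A(x)\}$ forces $\{S_1(T)\cap S_2(T) \subseteq \UU_A(x)\}$ while the transported $\prob_U$-content is not diminished at the levels that matter. The containment half follows from the decomposition $\UU_A(x) = \UU_{A_1}(x) \cup \UU_{A_2}(x)$ together with the implication noted above: if $T$ agrees with $A$ at $x$, then its reaches keep $S_1(T)$ or $S_2(T)$ inside the corresponding single-assertion a-event, and that implication applies.

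I expect the real obstacle to be the $\prob_U$-content bookkeeping rather than the containment half. The difficulty is that a focal element $T$ may \emph{straddle} the two families---carrying mass that lies in $\UU_{A_1}(x) \cup \UU_{A_2}(x)$ for a wide range of $x$ without lying in either piece---and no single intersection can, on its own, reproduce such mass at every $x$ simultaneously. The resolution must use the assumed validity of $\T$: a valid predictive random set cannot accumulate spurious belief for $A$ through such straddling, so the mass surviving the reshaping, aggregated across the nested family, still matches $\bel_x(A;\T)$ at each $x$. Making this precise---via an extremal argument over the one-parameter nested family showing that straddling focal elements are themselves dominated by intersection-shaped ones---is the technical heart of the proof. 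Once the shape is pinned down to intersections, the balance conditions of Section~\ref{SS:towards} serve only to single out the particular nested family of intersections and play no role in the present shape result.
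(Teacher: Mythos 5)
Your construction is the paper's construction, step for step: reduce to an admissible $\T$ with nested support $\TT$; attach to each focal element $T \in \TT$ its smallest containing a-event from each family (your ``reach'' is exactly the paper's $S_j(T) = \text{closure}\bigl\{\bigcap_{y:\, \UU_{A_j}(y) \supset T} \UU_{A_j}(y)\bigr\}$); take $\S$ supported on the nested collection $\{S_1(T) \cap S_2(T): T \in \TT\}$ with the natural measure \eqref{eq:natural}; and aim for the pointwise inequality $\bel_x(A;\S) \geq \bel_x(A;\T)$, which implies the stochastic domination \eqref{eq:dominate}. Up to that point you and the paper agree. The genuine gap is that you do not finish: your closing paragraph concedes that making the key step precise ``is the technical heart of the proof'' and offers only a gesture at an unspecified ``extremal argument'' that ``must use the assumed validity of $\T$.'' That step is never carried out, so the proposal is not a proof.

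Moreover, the difficulty you defer is located at the wrong step, and the tool you reach for is not the one that closes it. Straddling focal elements threaten the \emph{containment} half---the part you claim ``follows''---not the measure bookkeeping. If $T \subseteq \UU_{A_1}(x) \cup \UU_{A_2}(x)$ but meets both (disjoint) pieces, which is possible for a disconnected $T$, then neither $\UU_{A_j}(x)$ contains $T$, each reach $S_j(T)$ then contains $\UU_{A_j}(x)$ together with the part of $T$ outside it, and $S_1(T) \cap S_2(T) \subseteq \UU_A(x)$ can fail: with the normal-mean association $x = \theta+u$, $A_1 = \{\theta<0\}$, $A_2 = \{\theta>0\}$, so that $\UU_{A_1}(0) = (0,\infty)$ and $\UU_{A_2}(0) = (-\infty,0)$, the focal element $T = [-1.1,-1] \cup [1,1.1]$ satisfies $T \subseteq \UU_A(0) = \RR\setminus\{0\}$, yet $S_1(T) \cap S_2(T) = [-1.1,1.1] \ni 0$. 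The paper closes exactly this point by invoking disjointness of $\UU_{A_1}(x)$ and $\UU_{A_2}(x)$ to assert that $T \subseteq \UU_A(x)$ forces $T \subseteq \UU_{A_1}(x)$ or $T \subseteq \UU_{A_2}(x)$ (tacitly excluding straddling); minimality of the reaches then gives $S_1(T) \subseteq \UU_{A_1}(x)$ or $S_2(T) \subseteq \UU_{A_2}(x)$, hence $S(T) := S_1(T) \cap S_2(T) \subseteq \UU_A(x)$, establishing the equivalence $\{T \subseteq \UU_A(x)\} \Leftrightarrow \{S(T) \subseteq \UU_A(x)\}$. Once that equivalence holds, the bookkeeping you expected to be the obstacle is trivial: $T \subseteq S(T)$ gives $\prob_U(T) \leq \prob_U(S(T))$, so
\[
\bel_x(A;\T) = \sup_{T \in \TT:\, T \subset \UU_A(x)} \prob_U(T) \;\leq\; \sup_{T \in \TT:\, S(T) \subset \UU_A(x)} \prob_U(S(T)) = \bel_x(A;\S)
\]
for every $x$. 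In particular, validity of $\T$ is used nowhere beyond the WLOG-admissibility reduction you had already made, so an argument resting on ``a valid predictive random set cannot accumulate spurious belief through straddling'' is not the missing ingredient; what is needed, and what the paper supplies (if somewhat tersely), is the disjointness-driven splitting of the containment $T \subseteq \UU_A(x)$.
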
 

\begin{proof}
See Appendix~\ref{proof:intersection}.
\end{proof}

This result simplifies the search for optimal predictive random sets with respect to a complex assertion.  It does not completely resolve the problem, however, since there are many choices of predictive random sets with intersection focal elements.  In the normal mean problem, for example, the focal elements for the optimal predictive random sets with respect to $A_1$ and $A_2$ are one-sided intervals.  Therefore, the focal elements of the optimal predictive random set for $A=A_1 \cup A_2$ must be nested intervals, but should the intervals be symmetric or asymmetric?   See Section~\ref{SS:towards}.

\subsection{Multiple complex assertions}
\label{SS:multiple}

Suppose there are multiple assertions, $\{A_j: j \in J\}$, to be considered simultaneously, where $A_j = A_{j1} \cup A_{j2}$, $j \in J$, decomposes as a union of two disjoint simple assertions.  Each simple component has an optimal predictive random set according to Theorem~\ref{thm:optimal.simple}, and the corresponding optimal predictive random set, $\S_j$, for the union $A_j$ has intersection focal elements according to Theorem~\ref{thm:intersection}.  As before, intuition suggests that the optimal predictive random set for $\{A_j: j \in J\}$ would be supported on intersections of the focal elements for the individually optimal $\S_j$, $j \in J$.  To justify this intuition, we need a way to measure the efficiency of a predictive random set in this multiple-assertion context.  

\begin{defn}
\label{eq:generate}
An assertion $A$ is said to be \emph{generated by} $\{A_j: j \in J\}$ if $A$ can be written as a union of some or all of the $A_j$s.   Then a predictive random set $\S$ is at least as efficient as $\S'$ with respect to $\{A_j: j \in J\}$ if \eqref{eq:dominate} holds for all $A$ generated by $\{A_j: j \in J\}$.  
\end{defn}

The following result, a generalization of Theorem~\ref{thm:intersection}, shows that a restriction to predictive random sets supported on intersection focal elements is without loss of efficiency.   

\begin{thm}
\label{thm:new.intersection}
Let $\{A_j: j \in J\}$ be a collection of assertions, where $A_j = A_{j1} \cup A_{j2}$ partitions as a union of disjoint simple assertions.  Let $\S_j$ be the optimal predictive random set for $A_j$, $j \in J$.  Then, for any predictive random set $\T$, there exists an $\S$, whose focal elements are intersections of the focal elements of the $\S_j$s, such that $\S$ is at least as efficient as $\T$ with respect to $\{A_j: j \in J\}$.
\end{thm}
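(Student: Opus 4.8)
The plan is to reduce the multiple-assertion claim to the single-complex-assertion result already established in Theorem~\ref{thm:intersection}, and then build up the full collection $\{A_j : j \in J\}$ by iterating. First I would fix an arbitrary predictive random set $\T$ and recall what efficiency with respect to $\{A_j : j \in J\}$ means via Definition~\ref{eq:generate}: I must produce a single $\S$, supported on intersections of the focal elements of the $\S_j$, that dominates $\T$ in the sense of \eqref{eq:dominate} \emph{simultaneously} for every $A$ that is a union of some subcollection of the $A_j$. The natural candidate is the random set whose focal elements are the intersections $\bigcap_{j \in J} S_j$, where each $S_j$ ranges over the (nested) focal elements of $\S_j$, coupled together through a common auxiliary uniform index so that the nesting across $J$ is respected; concretely, I would take the focal elements to be indexed by a single parameter $t \in [0,1]$, with the $t$-th focal element being $\bigcap_{j \in J} S_j(t)$, where $S_j(t)$ is the focal element of $\S_j$ at level $t$ under its \eqref{eq:natural}-distribution.

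The key steps, in order, are as follows. First I would observe that each $A$ generated by $\{A_j : j \in J\}$ is itself a complex assertion of the form handled in Theorem~\ref{thm:intersection}, namely a union of simple assertions $A_{j1}, A_{j2}$ over the indices $j$ appearing in $A$. Second, I would show that for such an $A$, the belief function $\bel_X(A; \S)$ computed from my candidate $\S$ equals $\prob_\S\{\S \subseteq \UU_A(X)\}$, and that because the a-event $\UU_A(X)$ contains the intersection of the relevant a-events $\UU_{A_{jk}}(X)$, my intersection focal elements are exactly the sets that ``fit inside'' $\UU_A(X)$ as efficiently as possible. Third, I would invoke the argument underlying Theorem~\ref{thm:intersection}: given $\T$, one can replace $\T$ by a random set whose focal elements are intersections without decreasing the belief for any fixed generated $A$, because enlarging focal elements to the full intersection form only increases the $\prob_\S$-probability of containment in $\UU_A(X)$. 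The nontrivial point is that a \emph{single} construction works for all generated $A$ at once, which is where the common-index coupling is essential.

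The main obstacle I anticipate is precisely the simultaneity: Theorem~\ref{thm:intersection} delivers, for each complex assertion separately, an intersection-focal-element $\S$ dominating $\T$, but a priori the dominating $\S$ could depend on the assertion, and there are exponentially many generated assertions $A$. The crux is therefore to verify that the nested, common-index family $\{\bigcap_{j} S_j(t)\}_{t \in [0,1]}$ simultaneously realizes the optimal intersection structure for every $A$ generated by $\{A_j : j\in J\}$. I expect this to follow from the nestedness of each $\UU_{A_j}$ (so that each $\S_j$ is monotone in $t$) together with the fact that intersection and the containment relation $\S \subseteq \UU_A(X)$ interact monotonically: if $\bigcap_j S_j(t) \subseteq \UU_A(X)$ for the indices defining $A$, then shrinking to a subcollection only relaxes the requirement. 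Once this monotone compatibility is checked, the domination $\bel_X(A;\S) \stgeq \bel_X(A;\T)$ holds uniformly over all generated $A$, completing the reduction; the remaining measurability and well-definedness of $\prob_\S$ under \eqref{eq:natural} are routine given the machinery in Appendix~\ref{S:measure}.
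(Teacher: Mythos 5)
There is a genuine gap, and it is conceptual rather than technical: you have replaced the theorem's existential claim with a stronger one that is false. The theorem says that \emph{for each given} $\T$ there exists a dominating $\S$ with intersection focal elements; nothing prevents $\S$ from depending on $\T$, and in general it must. Your plan instead fixes a single canonical candidate---the ``common-index'' coupling whose $t$-th focal element is $\bigcap_{j \in J} S_j(t)$ with the $S_j(t)$ matched by their $\prob_U$-probabilities---and claims it dominates \emph{every} $\T$ in the sense of \eqref{eq:dominate}. To see that this fails, take the paper's normal mean example with the single complex assertion $A = \{\theta \neq 0\}$ (so $J$ is a singleton and the theorem applies): your coupling produces the intervals symmetric about the origin, with $\bel_x(A;\S) = 2\Phi(|x|)-1$. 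Now let $\T$ be the admissible predictive random set with asymmetric nested intervals $T_r = [a_r, b_r]$, where $\Phi(a_r) = (1-c)r$ and $\Phi(b_r) = 1-cr$ for some fixed $c > 1/2$. For all sufficiently large $x > 0$ one gets $\bel_x(A;\T) = 1 - (1-\Phi(x))/c > 1 - 2(1-\Phi(x)) = \bel_x(A;\S)$, so for $\theta$ large and positive (a point of $A$) the belief $\bel_X(A;\T)$ is stochastically strictly larger than $\bel_X(A;\S)$; your fixed $\S$ is therefore \emph{not} at least as efficient as this $\T$. This is exactly why the paper follows Theorem~\ref{thm:new.intersection} with the balance and maximin machinery of Section~\ref{SS:towards}: among intersection-structured predictive random sets, no single coupling dominates all others under \eqref{eq:dominate}, and the symmetric (balanced) choice is singled out only by the weaker maximin criterion of Definition~\ref{def:prs.optimal}, not by stochastic dominance.

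The paper resolves your ``simultaneity'' worry in the opposite direction: it builds $\S$ \emph{from} $\T$, so that one construction serves all generated assertions because it never references any assertion. Concretely, assume without loss of generality that $\T$ is admissible with nested support $\TT$; for each focal element $T \in \TT$ and each $j \in J$, let $\tilde S_j(T) = \bigcap \{S_j(v) : S_j(v) \supset T\}$ be the smallest focal element of the optimal $\S_j$ containing $T$, set $\tilde S(T) = \bigcap_{j \in J} \tilde S_j(T)$, and equip the nested support $\SS = \{\tilde S(T): T \in \TT\}$ with the natural measure \eqref{eq:natural}. The key step, uniform over all $A$ generated by $\{A_j: j \in J\}$ and all $x$, is the equivalence: $T \subset \UU_A(x)$ if and only if $\tilde S(T) \subset \UU_A(x)$. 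One direction is immediate from $T \subset \tilde S(T)$; for the other, $T \subset \UU_A(x)$ forces $T$ inside a single simple a-event $\UU_{A_{j1}}(x)$ or $\UU_{A_{j2}}(x)$ for some $j$ appearing in $A$, whence $\tilde S_j(T) \subset \UU_{A_j}(x)$ and so the no-bigger set $\tilde S(T)$ lies in $\UU_A(x)$. Since $\prob_U(T) \leq \prob_U(\tilde S(T))$, this equivalence and \eqref{eq:natural} give $\bel_x(A;\T) \leq \bel_x(A;\S)$ pointwise in $x$, hence \eqref{eq:dominate} for every generated $A$ at once. If you replace your fixed common-index candidate with this $\T$-dependent closure construction, the remainder of your outline (reduction to simple a-events, nestedness, the natural measure) goes through essentially as you wrote it.
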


\begin{proof}
See Appendix~\ref{proof:new.intersection}.  
\end{proof}

As in the previous section, this result simplifies the search for an optimal predictive random set but does not completely resolve the problem, since it does not determine the particular shape of the focal elements.  Therefore, we need to push further.

\subsection{Balance and optimality}
\label{SS:towards}

To resolve the ambiguity about the shape of the optimal focal elements, here we will introduce some special structure to the problem.  Suppose that there exists transformations of $X$ that do not fundamentally change the inference problem.  As a simple example, if $X \sim \nm(\theta,1)$ and the association of interest is $A=\{\theta \neq 0\}$, then it is clear that changing the sign of $X$ should not have an impact on how much support there is for $A$.  In other words, this normal mean problem is invariant to sign changes.  More generally, let $\G$ be a group of bijections $g$ from $\XX$ to itself; as is customary, we shall write $gx$ for the image of $x$ under $g$, rather than $g(x)$.  Practically, the transformations in $\G$ represent symmetries of the inference problem, i.e., nothing fundamental about the problem changes if $gX$ is observed instead of $X$.  The key technical assumption here is that each $g$ commutes with the association mapping $a$, i.e., 
\begin{equation}
\label{eq:commutes}
g\,a(\theta, u) = a(g\theta, gu), \quad \forall \; (\theta,u), \quad \forall \; g \in \G. 
\end{equation}
Here we are implicitly assuming that $\G$ also acts upon the parameter space $\Theta$ and the auxiliary variable space $\UU$.  This can be relaxed by introducing groups acting on $\Theta$ and $\UU$, respectively, different from (but homomorphic to) $\G$, but we will not need this extra notational complexity for our application here.  The above condition is different from that which defines the usual group transformation models in that, in the right-hand side, $g$ also acts on $u$.  In fact, our variable selection application will not fit the usual group transformation structure, unless the design is orthogonal.  

We shall also require that the assertions respect these symmetries.  Let $A$ be an assertion generated the collection $\{A_j: j \in J\}$ of complex assertions.  Consider the subgroup of $\G$ to which $A$ is invariant, and write $\G_A = \{g \in \G: gA=A\}$.  The intuition is that the inference problem for $\theta$, at least as it concerns the assertion $A$, is unchanged if the problem is transformed by $g \in \G_A$.  

Before proceeding, it may help to see a simple example.  Consider, again, the normal mean problem, with $X=\theta + U$, $U \sim \nm(0,1)$, and assertion $A=\{\theta \neq 0\}$.  Then changing the sign of $X$ will not affect the problem.  So, we can take $\G$ to consist of the identity mapping and $x \mapsto -x$, and clearly \eqref{eq:commutes} holds; also, $\G_A = \G$.  

Moving on, recall the a-event $\UU_A(x) = \{\text{$u$: $x=a(\theta,u)$ for some $\theta \in A$}\}$.  In this case, an equivariance property follows immediately from the commutativity property \eqref{eq:commutes}:
\[ \UU_A(gx) = g \UU_A(x), \quad \forall \; g \in \G_A. \]
That is, given $A$, transforming $x \to gx$, for $g \in \G_A$, and then solving for $u$ is equivalent to solving for $u$ with the given $x$ and then transforming $u \to gu$.  

We now have the necessary structure to help specify an optimal predictive random set.  It suffices to focus on predictive random sets which are admissible.  So, let $\S$ be admissible and, for simplicity, suppose we can write the collection of closed nested focal elements as $\SS=\{S_r: r \in [0,1]\}$, where $r$ corresponds to the set's $\prob_U$-probability; in particular, $\prob_U(S_r) = 1-r$.  Then we have the following useful representation.  

\begin{lem}
\label{lem:belief}
$\prob_{X|\theta}\{\bel_X(A;\S) > 1-r\} = \prob_{X|\theta}\{\UU_A(X) \supset S_r\}$.  
\end{lem}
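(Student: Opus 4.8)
The plan is to convert the belief function into a statement about containment of the predictive random set in the a-event, and then to exploit the nested structure of $\SS$ together with the admissibility property \eqref{eq:natural}.

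First I would record the basic duality between the C-step output and the a-event. Since $\Theta_X(\S) = \bigcup_{u \in \S} \Theta_X(u)$, we have $\Theta_X(\S) \subseteq A$ if and only if $\Theta_X(u) \subseteq A$ for every $u \in \S$, i.e.\ if and only if $\S \subseteq \UU_A(X)$, where $\UU_A(X) = \{u : \Theta_X(u) \subseteq A\}$ is the a-event \eqref{eq:aevent}. Substituting this into the definition \eqref{eq:belief} gives $\bel_X(A;\S) = \prob_\S\{\S \subseteq \UU_A(X)\}$, which reduces the problem to understanding the containment functional of $\S$.

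Next I would bring in admissibility. Applying the natural property \eqref{eq:natural} with $K = \UU_A(X)$ and using $\prob_U(S_r) = 1-r$,
\[ \bel_X(A;\S) = \sup_{S \in \SS : S \subseteq \UU_A(X)} \prob_U(S) = \sup\{1 - r : S_r \subseteq \UU_A(X)\}. \]
Because the focal elements are nested and closed with $\prob_U(S_r) = 1-r$ strictly decreasing in $r$, the index set $R_X = \{r : S_r \subseteq \UU_A(X)\}$ is an up-set in $[0,1]$: if $S_r \subseteq \UU_A(X)$ and $r' \geq r$, then $S_{r'} \subseteq S_r \subseteq \UU_A(X)$. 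Hence the supremum equals $1 - r^*(X)$ with $r^*(X) = \inf R_X$, so that $\bel_X(A;\S) = 1 - r^*(X)$.

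It remains to translate the threshold event. From $\bel_X(A;\S) = 1 - r^*(X)$ we get $\{\bel_X(A;\S) > 1-r\} = \{r^*(X) < r\}$, i.e.\ the event that some $S_{r'}$ with $r' < r$ is contained in $\UU_A(X)$; by nestedness this forces $S_r \subseteq \UU_A(X)$, so $\{\bel_X(A;\S) > 1-r\} \subseteq \{\UU_A(X) \supseteq S_r\}$. Conversely $\{\UU_A(X) \supseteq S_r\} = \{r \in R_X\}$ differs from $\{r^*(X) < r\}$ only through the boundary index $r^*(X) = r$. I expect this boundary reconciliation to be the main (and only real) obstacle: the exact pathwise identity is $\{\bel_X(A;\S) \geq 1-r\} = \{\UU_A(X) \supseteq S_r\}$, and passing from $\geq$ to the stated $>$ costs precisely the event $\{\bel_X(A;\S) = 1-r\}$. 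I would close this gap at the level of probabilities by arguing that $\prob_{X|\theta}\{\bel_X(A;\S) = 1-r\} = 0$: in the continuous regression setting $\bel_X(A;\S)$ is a continuous function of the data, hence has an atomless law, so the two events carry the same $\prob_{X|\theta}$-probability and the displayed equality of probabilities follows. (Alternatively, when the nested family is such that $r^*(X)$ is $\prob_{X|\theta}$-a.s.\ not attained, the event $\{\UU_A(X) \supseteq S_r\}$ coincides with $\{\bel_X(A;\S) > 1-r\}$ pathwise.)
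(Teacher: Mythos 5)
Your proposal is correct and follows essentially the same route as the paper's own proof: write $\bel_x(A;\S)=\prob_\S\{\S\subseteq\UU_A(x)\}$, invoke the natural measure \eqref{eq:natural} to get $\sup\{1-r: S_r\subseteq \UU_A(x)\}$, and use nestedness of the $S_r$ to convert the threshold event into the containment event $\{\UU_A(X)\supset S_r\}$. If anything, you are more careful than the paper, whose proof asserts the pointwise biconditional $\bel_x(A;\S)>b \iff \UU_A(x)\supset S_{1-b}$ outright and silently passes over the boundary case $\bel_x(A;\S)=b$ (where only the $\geq$ identity holds pathwise), which you explicitly reconcile at the level of probabilities via atomlessness of the law of $\bel_X(A;\S)$.
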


\begin{proof}
See Appendix~\ref{proof:belief}.
\end{proof}

From Lemma~\ref{lem:belief} and the equivariance property above, if $g \in \G_A$, then 
\[ \prob_{X|\theta}\{\bel_{gX}(A;\S) > 1-r\} \equiv \prob_{X|\theta}\{\UU_A(gX) \supset S_r\} = \prob_{X|\theta}\{\UU_A(X) \supset g^{-1} S_r\}. \]
Since the understanding is that the inference problem, at least as it concerns the assertion $A$, is unchanged by transformations $X \to gX$ for $g \in \G_A$, it is reasonable to require that the distribution of $\bel_X(A;\S)$ be invariant to $\G_A$.  The previous display reveals that the way to achieve belief function invariance is to require the focal elements of the predictive random set to be invariant to $\G$.  This leads to the following notion of \emph{balance}.  

\begin{defn}
\label{def:prs.balance}
The predictive random set $\S$ is said to be \emph{balanced with respect to $A$} if each focal element $S \in \SS$ satisfies $gS=S$ for all $g \in \G_A$.  Moreover, $\S$ is said to be \emph{balanced} if the aforementioned invariance holds for all $g \in \G$.  
\end{defn}

Balance itself is a reasonable property, given that the transformations are, by definition, irrelevant to the inference problem.  It is also interesting and practically beneficial that balance can be checked without doing any probability calculations; however, the focal elements $\SS$ and the transformations $\G$ depend on the model.   

Beyond the intuitive appeal of balance, we claim that balance leads to a particular form of optimality.  Recall that, for IM efficiency, the general goal is to choose the predictive random set to make the belief function stochastically large when the assertion is true.  With this in mind, we propose the following notion of \emph{maximin optimality}.  

\begin{defn}
\label{def:prs.optimal}
A predictive random set $\S^\star$, with focal elements $\{S_r^\star: r \in [0,1]\}$, is \emph{maximin optimal} if it maximizes
\[ \min_{g \in \G_A} \prob_{X|\theta}\{\bel_{gX}(A;\S) > 1-r\} \equiv \min_{g \in \G_A} \prob_{X|\theta}\{\UU_A(X) \supset g^{-1} S_r\} \]
over all admissible $\S$ with focal elements $\{S_r: r \in [0,1]\}$, uniformly for all $A$ generated by $\{A_j: j \in J\}$, for all $\theta \in A$, and for all $r \in [0,1]$.  
\end{defn}

\begin{thm}
\label{thm:balance.optimal}
If a predictive random set $\S$ is balanced in the sense of Definition~\ref{def:prs.balance}, then it is maximin optimal in the sense of Definition~\ref{def:prs.optimal}.  
\end{thm}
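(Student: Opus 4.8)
The plan is to translate the maximin objective entirely into statements about a-events, isolate the one feature that balance forces, and then close with a comparison against an arbitrary admissible competitor. Two ingredients are already in place just above the statement. By Lemma~\ref{lem:belief} (in its pointwise form) we have, for an admissible $\S$ with closed nested focal elements $\{S_r\}$ and $\prob_U(S_r)=1-r$, the identity $\{\bel_x(A;\S)>1-r\}=\{\UU_A(x)\supset S_r\}$; and by the commutativity \eqref{eq:commutes} the a-events obey the equivariance $\UU_A(gx)=g\,\UU_A(x)$ for $g\in\G_A$. Composing these gives, for each $g\in\G_A$,
\[ \prob_{X|\theta}\{\bel_{gX}(A;\S)>1-r\}=\prob_{X|\theta}\{\UU_A(X)\supset g^{-1}S_r\}, \]
which is exactly the quantity in Definition~\ref{def:prs.optimal}. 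So it suffices to work with the right-hand side, uniformly over $A$ generated by $\{A_j:j\in J\}$, over $\theta\in A$, and over $r\in[0,1]$.

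The engine of the proof is the observation that balance makes the adversary's choice of $g$ irrelevant. If $\S$ is balanced then every focal element is $\G_A$-invariant, so $g^{-1}S_r=S_r$ for all $g\in\G_A$ and all $r$. Hence the map $g\mapsto\prob_{X|\theta}\{\UU_A(X)\supset g^{-1}S_r\}$ is constant, the inner minimization in Definition~\ref{def:prs.optimal} is not binding, and the objective collapses to the plain coverage $\prob_{X|\theta}\{\UU_A(X)\supset S_r\}$. In game-theoretic terms a balanced predictive random set is an equalizer, and this step needs no probability computation at all, only the geometric invariance $gS_r=S_r$.

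To turn the equalizer property into optimality I would take an arbitrary admissible competitor $\S'$ with level-$r$ focal element $S_r'$ (so $\prob_U(S_r')=1-r$) and bound its objective above by the orbit average,
\[ \min_{g\in\G_A}\prob_{X|\theta}\{\UU_A(X)\supset g^{-1}S_r'\}\le\frac{1}{|\G_A|}\sum_{g\in\G_A}\prob_{X|\theta}\{\UU_A(X)\supset g^{-1}S_r'\}, \]
and then argue that this average is dominated by the coverage of the $\G_A$-invariant focal element of the same $\prob_U$-measure. Here I would use that $S\mapsto\prob_{X|\theta}\{\UU_A(X)\supset S\}$ is nonincreasing under set inclusion, together with the nestedness of the balanced family and the requirement that the bound hold uniformly over all $r$, to line up measures correctly. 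Combining the two displays with the equalizer identity would give $\min_g\prob_{X|\theta}\{\UU_A(X)\supset g^{-1}S_r'\}\le\prob_{X|\theta}\{\UU_A(X)\supset S_r\}$ for $\theta\in A$, which is the assertion.

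The last comparison is where I expect the real difficulty. The obvious symmetrizations of $S_r'$ — the intersection $\bigcap_{g}g^{-1}S_r'$ and the union $\bigcup_{g}g^{-1}S_r'$ — are $\G_A$-invariant but have $\prob_U$-measure below and above $1-r$ respectively, so neither can be matched to $S_r$ at the same level directly; monotonicity then pushes the two bounds in opposite directions and the naive chain does not close. Worse, because the model is not a standard group-transformation model unless the design is orthogonal, $\G$ need not preserve $\prob_U$, so one cannot rewrite $\prob_{X|\theta}\{\UU_A(X)\supset g^{-1}S_r'\}$ as a coverage at a transformed parameter and thereby reduce the orbit average to a single invariant set. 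Producing a genuinely measure-preserving symmetrization, or instead routing the argument through the intersection-focal-element structure of Theorems~\ref{thm:intersection}--\ref{thm:new.intersection} so that the invariant competitor of measure $1-r$ is forced to coincide with $S_r$, is the delicate point that must be handled with care.
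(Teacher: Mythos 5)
Your reduction to coverage events via Lemma~\ref{lem:belief} and the equivariance $\UU_A(gx)=g\,\UU_A(x)$ is exactly the paper's starting point, and your ``equalizer'' observation---that balance makes $g^{-1}S_r=S_r$, collapsing the inner minimum to the plain coverage $\prob_{X|\theta}\{\UU_A(X)\supset S_r\}$---is the attainment half of the paper's argument. The genuine gap is in the comparison step, and you have correctly diagnosed that your own proposed route through it does not close. The paper does not use an orbit average at all. Instead, for an arbitrary competitor focal element $S$ it defines the \emph{core} $S^\circ=\bigcap_{g\in\G_A}gS$, and exploits two facts: $S^\circ$ is balanced, and $S^\circ\subset gS$ for every $g\in\G_A$. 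Since the coverage map $S\mapsto\prob_{X|\theta}\{\UU_A(X)\supset S\}$ is anti-monotone under inclusion, the coverage of $S^\circ$ dominates \emph{every} term $\prob_{X|\theta}\{\UU_A(X)\supset gS\}$, hence in particular the minimum over $g$ (indeed even the maximum). This single inclusion argument replaces your average entirely: it needs no finiteness of $\G_A$, no measure-preservation of $\prob_U$ under $\G$, and no convexity-type domination of an average by an invariant set. A balanced $\S$ is its own core, so it attains this upper bound, and that is the whole proof.

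That said, your instinct about where the difficulty lies is not wrong---it is just that the paper sidesteps the issue rather than solves it. The core $S^\circ$ has $\prob_U$-measure strictly less than $1-r$ in general, so the paper's bound compares the competitor's maximin objective at level $r$ to the coverage of a balanced set that is \emph{not} a level-$r$ focal element of an admissible predictive random set; it is not a level-matched comparison between two admissible families, which is what a literal reading of Definition~\ref{def:prs.optimal} would ask for. The paper's proof should be read as saying: replacing any focal element by its core can only increase the maximin objective, so the optimum is attained within the class of balanced predictive random sets (which, given the intersection structure forced by Theorems~\ref{thm:intersection}--\ref{thm:new.intersection}, pins down the hyper-cubes in the application). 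Your insistence on producing an invariant competitor of measure exactly $1-r$ is precisely the refinement the paper avoids; pursuing it leads to the obstruction you describe, whereas the core argument makes it unnecessary.
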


The main idea in the proof, presented in Appendix~\ref{proof:balance.optimal}, is the notion of the ``core'' $S^\circ$ of a given focal element $S$.  In particular, the core is defined as $S^\circ = \bigcap_{g \in \G_A} gS$, and the proof relies on the fact that it is both balanced and contained in each $gS$.

\section{Model uncertainty in regression}
\label{S:optimal.reg}

\subsection{Optimal IM for assertions about the model}
\label{SS:vs.assertions}

Recall the dimension-reduced association $\hat\theta = \theta + \hat\sigma U$ in \eqref{eq:marginal.association}, where $U$ is a $p$-vector distributed as $\prob_U=\tdist_p(0, L, n-p-1)$, and $L$ is a matrix with ones on the diagonal.  The intercept has been marginalized out, so each of $\theta_1,\ldots,\theta_p$ can be considered zero or non-zero; the goal is to summarize the uncertainty about these possible models.  

Consider the collection of complex assertions $A_j=\{\theta: \theta_j \neq 0\}$, $j=1,\ldots,p$.  Consider first a particular $A_j$.  This can be written as $A_j = A_{j1} \cup A_{j2}$, where $A_{j1} = \{\theta: \theta_j < 0\}$ and $A_{j2} = \{\theta: \theta_j > 0\}$.  We claim that these sub-assertions are both simple in the sense of Section~\ref{SS:simple}.  Take $A_{j1}$, for example.  Then the corresponding a-event is 
\[ \UU_{A_{j1}}(y) = \{u: \hat\theta - \hat\sigma u \in A_{j1}\} = \{u: u_j > T_j\}, \]
where $T_j = \hat\theta_j/\hat\sigma$.  This a-event is nested because it shrinks and expands monotonically as a function of $T_j$.  The same is true for $A_{j2}$ and for all the other $j=1,\ldots,p$.  Therefore, by Theorem~\ref{thm:optimal.simple}, the optimal predictive random sets for the individual sub-assertions $A_{j1}$ and $A_{j2}$ are each supported on collections of half hyper-planes.  Next, it follows from Theorem~\ref{thm:intersection} that the optimal predictive random set $\S_j$ for the complex assertion $A_j$ is supported on intersections of half hyper-planes, i.e., cylinders $\{u: a_j \leq u_j \leq b_j\}$.  If we are considering $\{A_j: j=1,\ldots,p\}$ simultaneously, then it follows from Theorem~\ref{thm:new.intersection} that the optimal predictive random set is supported on boxes---intersections of $p$ marginal cylinders---in $\RR^p$.  The remaining question is what shape should the boxes be.  

Towards optimality, we need to consider what transformations leave the model uncertainty problem invariant in the sense of Section~\ref{SS:towards}.  As in the simple normal mean example discussed previously, sign changes to coordinates of $\hat\beta$ are irrelevant.  In addition, the labeling of the variables $j=1,\ldots,p$ is arbitrary, so permutations of the variable labels are also irrelevant.  This suggests we consider the group $\G$ of \emph{signed permutations}; that is, each $g \in \G$ acts on a $p$-vector $x$ by matrix multiplication (on the left), where the matrix factors as a product of a diagonal matrix with $\pm 1$ on the diagonal and a permutation matrix.  It is clear that the association commutes with $\G$ in the sense of \eqref{eq:commutes}.  %{\color{red}(To be precise, one should first take the group as an action on the model for $(\hat\beta,\hat\sigma)$, and then do the marginalization steps as discussed in Section~\ref{SS:dim.red} above.)}  
With the group $\G$ specified, it is also clear what shape of boxes the optimal predictive random set should be supported on.  According to Definition~\ref{def:prs.balance}, a balanced predictive random set should have focal elements---in this case, shaped like boxes in $\RR^p$---that are invariant to the transformations in $\G$.  The only such boxes are hyper-cubes centered at the origin.    

\begin{cor}
\label{cor:optimal.vs}
The admissible hyper-cube predictive random set $\S$, given by 
\begin{equation}
\label{eq:optimal.box}
\S = \{u \in \RR^p: \|u\|_\infty \leq \|U\|_\infty\}, \quad U \sim \prob_U = \tdist_p(0, L; n-p-1), 
\end{equation}
is balanced in the sense of Definition~\ref{def:prs.balance} and maximin optimal for $\{A_j: j=1,\ldots,p\}$ in the sense of Definition~\ref{def:prs.optimal}.  
\end{cor}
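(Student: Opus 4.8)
The plan is to treat this corollary as the specialization of the general theory of Section~\ref{S:optimal} to the variable selection problem. The preceding discussion has already done the conceptual work of identifying the shape: Theorems~\ref{thm:optimal.simple}, \ref{thm:intersection}, and~\ref{thm:new.intersection} force the support to consist of boxes, and balance under the signed-permutation group $\G$ then singles out origin-centered hyper-cubes. What remains, therefore, is to verify directly that the specific random set in \eqref{eq:optimal.box} is (i)~admissible and (ii)~balanced in the sense of Definition~\ref{def:prs.balance}; the maximin optimality claim then follows immediately by invoking Theorem~\ref{thm:balance.optimal}.

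First I would check admissibility against Definition~\ref{def:admissible}. The focal elements are the closed hyper-cubes $S_c = \{u \in \RR^p : \|u\|_\infty \leq c\}$ for $c \geq 0$, augmented with $\varnothing$ and $\RR^p$. Nestedness is immediate, since $S_c \subseteq S_{c'}$ whenever $c \leq c'$, so condition~(i) holds. For condition~(ii), I would note that \eqref{eq:optimal.box} is precisely the canonical construction of a random set from a single draw $U \sim \prob_U$, namely $\S = S_{\|U\|_\infty}$. Because the $\tdist_p(0,L;n-p-1)$ distribution is atomless, the map $c \mapsto \prob_U(S_c)$ increases continuously from $0$ to $1$, so the family $\{S_c\}$ is indexed by its probability content exactly as in the setup preceding Lemma~\ref{lem:belief}, and the induced distribution $\prob_\S$ satisfies \eqref{eq:natural} by the existence result in Appendix~\ref{S:measure}.

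Next I would verify balance. A signed permutation $g \in \G$ acts on $\RR^p$ by permuting coordinates and flipping some signs, and both operations leave the sup-norm unchanged, so $\|gu\|_\infty = \|u\|_\infty$ for every $u$. Consequently $g S_c = S_c$ for every $c \geq 0$ and every $g \in \G$, so each focal element is invariant under all of $\G$; this is the stronger notion of balance in Definition~\ref{def:prs.balance}, and in particular $\S$ is balanced with respect to every assertion $A$ generated by $\{A_j : j=1,\ldots,p\}$, since $\G_A \subseteq \G$. It is worth recording, en route, that the framework of Section~\ref{SS:towards} does apply here: $g(\theta + \hat\sigma u) = g\theta + \hat\sigma(gu)$ because $g$ is linear and $\hat\sigma$ is a scalar, so the commutativity condition \eqref{eq:commutes} holds. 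Having established admissibility and balance, maximin optimality for $\{A_j : j=1,\ldots,p\}$ is then delivered directly by Theorem~\ref{thm:balance.optimal}.

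I expect the only point requiring genuine care to be the admissibility verification---specifically, confirming that the one-parameter hyper-cube family is a legitimate nested support indexed by probability content and that \eqref{eq:optimal.box} realizes the distribution \eqref{eq:natural}. This rests on the atomlessness and coordinatewise continuity of the multivariate $\tdist$-law and on the general construction in Appendix~\ref{S:measure}; the balance step, by contrast, is a one-line norm identity, and the optimality conclusion is purely an appeal to the already-proved Theorem~\ref{thm:balance.optimal}.
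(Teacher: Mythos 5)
Your proposal is correct and follows essentially the same route as the paper: the paper's justification of Corollary~\ref{cor:optimal.vs} is exactly the Section~\ref{SS:vs.assertions} chain (Theorems~\ref{thm:optimal.simple}, \ref{thm:intersection}, \ref{thm:new.intersection} forcing box-shaped focal elements, signed-permutation invariance singling out origin-centered hyper-cubes) capped by Theorem~\ref{thm:balance.optimal}, which is precisely how you argue. Your explicit verification of admissibility via the level-set construction of Appendix~\ref{S:measure} and of balance via the sup-norm identity $\|gu\|_\infty = \|u\|_\infty$ just fills in details the paper leaves implicit.
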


\subsection{IM output and its interpretation}
\label{SS:interpret}

Now that we have specified the optimal predictive random set for the P-step, the C-step proceeds just as before.  It is helpful, however, to describe the IM output and its interpretation as it pertains to model uncertainty quantification.  To start, we need a bit more notation.  Let $\J \subseteq \{1,2,\ldots,p\}$ denote the collection of indices corresponding to the truly non-zero coefficients, i.e., $\theta_j \neq 0$ for all $j \in \J$.  Also, for a generic subset $J \subseteq \{1,2,\ldots,p\}$, let $\theta_J$ denote the corresponding sub-vector of $\theta$.  

Consider the following assertion about the model:
\[ C_J = \{\J \subseteq J\} = \{\theta: \theta_{J^c} = 0\}. \]
Note that $C_J^c = \bigcup_{j \in J^c} A_j$, so this new assertion is generated by the model assertions $A_1,\ldots,A_p$ from before.  Using the optimal predictive random set $\S$ in \eqref{eq:optimal.box}, we have 
\begin{equation}
\label{eq:plausibility}
\pl_Y(C_J; \S) = \prob_U\{\|U\|_\infty >  \|T_{J^c}\|_\infty\} = 1 - F(\|T_{J^c}\|_\infty), 
\end{equation}
where $F$ is the distribution function of $\|U\|_\infty$ when $U \sim \tdist_p(0, L; n-p-1)$.  The distribution function $F$ can be evaluated via Monte Carlo or the method of \citet{genz.bretz.2002}, implemented in the R package {\tt pmvt}.  

\begin{toy}
For the illustrative example in Section~\ref{SS:valid.example}, where $p=2$, the error variance $\sigma^2=1$ was known, and the design matrix was identity, the distribution function $F$ has a simpler form, namely, $F(z) = \{1 - 2 \Phi(-z)\}^2$, $z \geq 0$.  Then the plausibility functions computed in that example, and summarized in Figure~\ref{fig:simple}, are based on the expression \eqref{eq:plausibility} with this distribution function and $T$ as the pair of observations $(X_1,X_2)$.  
\end{toy} 

How to interpret the plausibility \eqref{eq:plausibility} above?  On one hand, it is clear both from intuition and from the formula that $\pl_Y(C_J; \S)$ is non-decreasing in $J$ and that the full model has plausibility 1.  On the other hand, since the optimal hyper-cube predictive random set has positive volume, the belief in any proper sub-model is zero.  So, to borrow the terminology from \citet{dempster2008}, larger models have more ``don't know'' probability, which is consistent with idea that larger models have more parameters to estimate and are more difficult to interpret.  Consequently, only the relatively small plausibilities, which correspond to relatively small models, can be interpreted.  To help understand what ``relatively small'' means, we have the following calibration result.  

\begin{thm}
\label{thm:pl.vs}
If $C_J$ is true, then $\pl_Y(C_J; \S)$ is stochastically no smaller than $\unif(0,1)$.  
\end{thm}

\begin{proof}
It is clear that, $C_J$ is true, i.e., if $\theta_{J^c}=0$, then $\|U\|_\infty$ is stochastically no smaller than $\|T_{J^c}\|_\infty$.  Then the result follows from the fact that $F(\|U\|_\infty) \sim \unif(0,1)$.   
\end{proof}

The result in Theorem~\ref{thm:pl.vs} explains why the IM plausibility's distribution function falls on or below the diagonal line in Panels~(a) and (c) in Figure~\ref{fig:simple}.  Moreover, optimality of the predictive random set suggests that the IM should also be efficient, i.e., if $C_J$ is false, then $\pl_Y(C_J; \S)$ should be stochastically smaller than $\unif(0,1)$, which is the conclusion in Panels~(b) and (d) of the same figure.  

For further illustration, consider the prostate cancer data analyzed by \citet{tibshirani1996} and others.  This study examined the association between the prostate specific antigen (PSA) level and some clinical measures among men who were about to receive a radical prostatectomy.  There were $n=97$ men with $p=8$ predictors, four of which, including cancer volume (\texttt{lcavol}), prostate weight (\texttt{lweight}), capsular penetration (\texttt{lcp}), and benign prostatic hyperplasia amount (\texttt{lbph}), were log transformed. The other four predictors were age, seminal vestical invasion (\texttt{svi}), Gleason score (\texttt{gleason}), and percentage Gleason scores 4 or 5 (\texttt{pgg45}). The response variable is the log PSA level.

We can compute the plausibilities \eqref{eq:plausibility} for each $J$, and the results are summarized in Figure~\ref{fig:prostate}, where the plausibilities are arranged by the corresponding model size.  That is, for each $s=0,1,\ldots,8$, there are $\binom{8}{s}$ plausibility values displayed vertically.  As mentioned previously, only the relatively small plausibilities in this figure can be interpreted and, since we are interested in models which are both small and sufficiently plausible, we focus on maximum plausibility for each $s$.  The maximum plausibilities for $s=1$ and $s=2$ are both rather small; the maximum value (0.09) in the $s=2$ column corresponds to the model that contains variables \texttt{lcavol} and \texttt{svi} only.  If we move to $s=3$ variables, then the largest plausibility value jumps to 0.44, corresponding to the model that adds the variable \texttt{lweight} to the previous two-variable model.  In fact, this latter three-variable model is the one that is selected by lasso.  The plot reveals that there is a lot more uncertainty, i.e., ``don't know,'' for model assertions that allow for the possibility that \texttt{lweight} is included in the true model.  We conclude that data only provides evidence to support the claim that the true model is a subset of $\{\text{\tt lcavol}, \text{\tt svi}\}$; see Section~\ref{SS:vs.procedure}.

\begin{figure}
\begin{center}
\scalebox{0.65}{\includegraphics{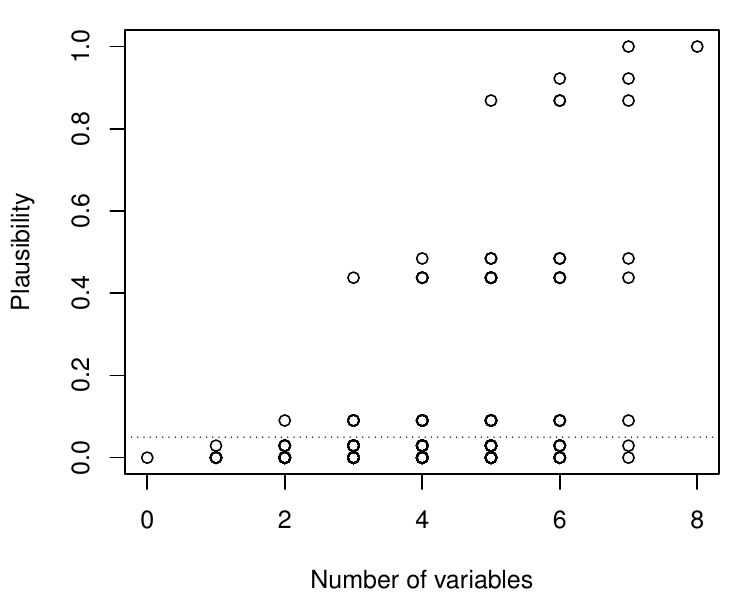}}
\end{center}
\caption{Plot of the model plausibility values, arranged by model size, for the prostate cancer data example studied in Section~\ref{SS:interpret}.}
\label{fig:prostate}
\end{figure}

\subsection{On post-selection inference}
\label{SS:post}

Using the balanced hyper-cube predictive random set $\S$ in Corollary~\ref{cor:optimal.vs}, we can construct a plausibility function for singletons $\{\theta\}$:
\[ \pl_Y(\theta; \S) := \pl_Y(\{\theta\}; \S) = 1 - F\bigl( \|\hat\sigma^{-1}(\hat\theta - \theta)\|_\infty \bigr). \]
This plausibility function satisfies $\pl_Y(\hat\theta; \S) = 1$ and is decreasing in $\theta$ away from $\hat\theta$ with hyper-cube shaped contours.  Consequently, by thresholding the plausibility function at level $\alpha \in (0,1)$, we obtain the plausibility region
\[ \{\theta: \pl_Y(\theta; \S) \geq \alpha\} = \bigl\{ \theta: F\bigl( \|\hat\sigma^{-1}(\hat\theta - \theta)\|_\infty \bigr) \leq 1-\alpha \bigr\}. \]
Since the predictive random set $\S$ is admissible, it follows from the general IM theory that the plausibility region above has nominal frequentist coverage $1-\alpha$.  Moreover, the shape of these plausibility regions is a hyper-cube, with side lengths characterized by quantiles of the $\ell_\infty$-norm of multivariate Student-t random vectors.    

An important consequence of our focus on validity simultaneously across a collection of assertions is that a naive projection of these plausibility regions to any sub-model $J \subseteq \{1,2,\ldots,p\}$ gives a new hyper-cube plausibility region that also has nominal frequentist coverage.  Since this conclusion holds uniformly in $J$, it also holds if $J$ is chosen based on data.  Such considerations are relevant in the context of post-selection inference.  As \citet{bbbzz2013} argue, when data is used to select a model, then one cannot use the model-specific distribution theory for valid inference.  This leads to the fundamental question of how to achieve valid inference when the model is chosen based on data.  They propose a procedure they call ``POSI'' for valid post-selection inference, and it turns out that their procedure is identical to that obtained by projection of the above plausibility region to a sub-model selected by data.  The take-away message here is that, by insisting on simultaneous validity over all relevant assertions, the IM approach can automatically handle the challenging post-selection inference problem.  This is a consequence of our focus on the main goal of valid uncertainty quantification about the model.

\subsection{IM-driven variable selection}
\label{SS:vs.procedure}

Section~\ref{SS:vs.assertions} showed how to optimally quantify model uncertainty, and the previous section explained how this implies valid post-selection inference.  It is also possible to develop an IM-driven variable selection procedure having desirable properties.  We want to be clear that valid uncertainty quantification about the model is our primary goal, and the result of a variable selection procedure is only one kind of summary of the IM output.   

Intuitively, a model or collection of variables $J$ is supported by data $Y$ if $\pl_Y(C_J; \S)$ is not too small.  In other words, good models are those which are ``sufficiently plausible'' given data.  Following this idea, and the remarks in Section~\ref{SS:interpret}, an IM-driven approach for variable selection would be to fix $\alpha \in (0,1)$ and then pick the smallest collection $J$ such that $\pl_Y(C_J; \S) > \alpha$.  That is, define 
\begin{equation}
\label{eq:vs.rule}
\hat J_\alpha(Y) = \text{smallest set $J$ such that $\pl_Y(C_J; \S) > \alpha$}. 
\end{equation}
We claim that the IM-driven procedure \eqref{eq:vs.rule} satisfies a \emph{selection validity} property:
\begin{equation}
\label{eq:selection.validity}
\prob_{Y|\J}\{\hat J_\alpha(Y) \subseteq \J\} \geq 1-\alpha, \quad \forall \; \J \subseteq \{1,2,\ldots,p\}.
\end{equation}

\begin{thm}
\label{thm:im.vs}
The procedure \eqref{eq:vs.rule} has the selection validity property \eqref{eq:selection.validity}.
\end{thm}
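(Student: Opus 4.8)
The plan is to reduce the selection-validity inequality \eqref{eq:selection.validity} to a single application of Lemma~\ref{lem:pl.vs} at the true model $I=\I$. The whole argument is driven by the monotonicity of the map $I \mapsto \pl_T(B_I;\S)$, so I would establish that first.

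Since $\pl_T(B_I;\S) = 1 - F(\|T_{I^c}\|_\infty)$ and $\|T_{I^c}\|_\infty = \max_{i \in I^c}|T_i|$, enlarging $I$ shrinks $I^c$ and so cannot increase $\|T_{I^c}\|_\infty$; hence $I \subseteq I'$ implies $\pl_T(B_I;\S) \le \pl_T(B_{I'};\S)$. Because $U \sim \tdist_p(0,L;n-p-1)$ has a continuous distribution, $F$ is continuous and strictly increasing, so that $c_\alpha = F^{-1}(1-\alpha)$ is well defined and the threshold event $\{\pl_T(B_I;\S) > \alpha\}$ coincides with $\{\|T_{I^c}\|_\infty < c_\alpha\}$, i.e.\ with $\{i : |T_i| \ge c_\alpha\} \subseteq I$. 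Thus the family of index sets clearing the threshold is exactly $\{I : I \supseteq \{i : |T_i| \ge c_\alpha\}\}$; it is closed upward under inclusion and has the unique minimal element $\hat I_\alpha(T) = \{i : |T_i| \ge c_\alpha\}$, confirming that the minimizer in \eqref{eq:vs.rule} is well defined.

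The key step is the set identity
\[ \{\hat I_\alpha(T) \subseteq \I\} = \{\pl_T(B_\I;\S) > \alpha\}. \]
For the forward inclusion, if $\hat I_\alpha(T) \subseteq \I$ then monotonicity gives $\pl_T(B_\I;\S) \ge \pl_T(B_{\hat I_\alpha(T)};\S) > \alpha$, the last inequality because $\hat I_\alpha(T)$ itself clears the threshold. For the reverse, $\pl_T(B_\I;\S) > \alpha$ says that $\I$ clears the threshold, so by the upward-closed characterization above $\I$ contains the minimal element $\hat I_\alpha(T)$.

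To finish, I would observe that $\I$ indexes the truly nonzero coefficients, so $\theta_{\I^c}=0$ and the assertion $B_\I$ is true; Lemma~\ref{lem:pl.vs} then gives $\pl_T(B_\I;\S) \stgeq \unif(0,1)$, whence $\prob_{T|\I}\{\pl_T(B_\I;\S) > \alpha\} \ge 1-\alpha$. Combining this with the set identity yields $\prob_{T|\I}\{\hat I_\alpha(T) \subseteq \I\} \ge 1-\alpha$, which is \eqref{eq:selection.validity}. I expect no deep analytic obstacle here; the one point that genuinely needs the structure of the problem is the reverse inclusion in the identity, which relies on the thresholded family being upward closed with a well-defined minimum---precisely what the monotonicity of $\pl_T(B_I;\S)$ and the continuity of $F$ provide.
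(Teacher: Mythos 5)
Your proof is correct and takes essentially the same route as the paper: the paper's entire proof is the one-line observation that $\pl_T(B_\I;\S) > \alpha$ implies $\hat I_\alpha(T) \subseteq \I$, followed by an application of Lemma~\ref{lem:pl.vs}, which is exactly your reverse inclusion plus the final stochastic-dominance step. Your extra work---the monotonicity of $I \mapsto \pl_T(B_I;\S)$, the upward-closedness of the thresholded family, and the identification of its unique minimal element---usefully makes explicit why the ``smallest set'' in \eqref{eq:vs.rule} is well defined and why the paper's implication holds, but it is an elaboration of the same argument rather than a different one (and the forward inclusion of your set identity is not actually needed).
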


\begin{proof}
$\pl_Y(C_\J; \S) > \alpha$ implies $\hat J_\alpha(Y) \subseteq \J$, so apply Theorem~\ref{thm:pl.vs}. 
\end{proof}

%{\color{red} The IM-based procedure here resembles the single-step \v Sid\'ak procedure as described in, e.g., \citet{dudoit2003}.  Given that step-up and step-down methods are more powerful in some cases, it would be tempting to modify the IM approach to mimic a multiple-step approach.  The way this could be accomplished within the IM framework would be to let take multiple predictive random sets, in particular, one for each assertion.  However, the use of multiple predictive random sets can result in violations to the basic belief function properties, such as monotonicity, resulting in possibly incoherent inference.  To avoid such dangers, we recommend the use of a single predictive random set.}  

To implement the above procedure, it is not necessary to evaluate the plausibility function at $C_J$ for each $J \subseteq \{1,2,\ldots,p\}$.  In fact, the plausibility depends on the value of $\|T_{J^c}\|_\infty$, so we only need to look at $p$ different models, based on a sorting of the t-statistics by their magnitude.  Let $\pi$ be a permutation that ranks the $T$ values in descending order according to their magnitudes, i.e., $|T_{\pi(1)}| > |T_{\pi(2)}| > \cdots > |T_{\pi(p)}|$, and then compute 
\begin{equation}
\label{eq:vs.pl}
\mpl_Y(s) = \max_{J:|J|=s} \pl_Y(C_J; \S) = 1 - F(|T_{\pi(s+1)}|), \quad s=0,1,\ldots,p-1, 
\end{equation}
and $\mpl_Y(p) \equiv 1$.  This is a naive ``marginal'' plausibility function for the \emph{size} of the model \citep[][Sec.~G]{shafer1987}.  If $s_\alpha^\star$ is the smallest $s$ such that $\mpl_Y(s) > \alpha$, then 
\begin{equation}
\label{eq:im.vs.proc}
\hat J_\alpha(Y) = \pi^{-1}(\{1,\ldots,s_\alpha^\star\}).
\end{equation}
For example, for the prostate cancer data discussed in Section~\ref{SS:interpret}, based on the summary in Figure~\ref{fig:prostate}, if set the cutoff at $\alpha=0.05$, then the IM-based rule \eqref{eq:vs.rule} selects the two-variable model that includes \texttt{lcavol} and \texttt{svi}.  

A simulation study is performed to assess the performance of this variable selection procedure.  We generate 1000 data sets consisting of observations $Y \sim \nm_n(X\beta, \sigma^2 I_n)$, where the rows of the predictor variable matrix are draws from $\nm_p(0,\Omega)$ with an autoregressive correlation structure, i.e., $\Omega_{jk} = \rho^{|j-k|}$ for all $j,k$.  This model was used by \citet{tibshirani1996} in his simulation study.  We take $\sigma=3$ and $\rho=0.5$ and consider two different scenarios:
\begin{description}
\item[\it Scenario~1.] $p=7$, $\beta=(3, 1.5, 0, 0, 2, 0, 0)^\top$;
\vspace{-2mm}
%\item[\it Scenario~2.] $p=7$, $\beta=(3, 1.5, 0, 0, 2, 0, 0)^\top$, and $\rho=0.8$;
%\vspace{-2mm}
%\item[\it Scenario~3.] $p=7$, $\beta=(0.85, 0.85, 0.85, 0, 0, 0, 0)^\top$, and $\rho=0.5$; 
%\vspace{-2mm}
%\item[\it Scenario~4.] $p=7$, $\beta=(0.85, 0.85, 0.85, 0, 0, 0, 0)^\top$, and $\rho=0.8$;
%\vspace{-2mm}
%\item[\it Scenario~5.] $p=7$, $\beta = 0.85\,1_7$, and $\rho=0.5$;
%\vspace{-2mm}
%\item[\it Scenario~6.] $p=7$, $\beta = 0.85 \,1_7$, and $\rho=0.8$;
%\vspace{-2mm}
\item[\it Scenario~2.] $p=20$, $\beta=(0.85 \, 1_{10}^\top, 0_{10}^\top)^\top$.
%\vspace{-2mm}
%\item[\it Scenario~6.] $p=20$, $\beta=(0.85 \, 1_{10}^\top, 0_{10}^\top)^\top$, and $\rho=0.8$.
\end{description}
The relative comparisons based on simulations under some different settings were the same as here so these are not presented here.  For each scenario, we vary the sample size $n$ from 50 to 5000. For each simulated data set, the IM-driven variable selection procedure is applied.  Performance of this approach is compared to the lasso, where the tuning parameter is chosen using the 10-fold cross validation, the adaptive lasso, AIC, and BIC.  AIC and BIC variable selection is performed using exhaustive search of all possible models.  Results of these approaches are summarized in Figures~\ref{fig:scenario1} and \ref{fig:scenario2} for the two scenarios.  Specifically, we plot the percentage of selecting the true model, where all important variables are selected (Panel A), a subset of the true model, where some but not all important variables and no unimportant variables are selected (Panel B), a superset of the true model, where all important variables and at least one unimportant variable are selected (Panel D), and the other models, where at least one important variable is not selected and at least one unimportant variable is selected (Panel E). In order to show the selection validity of the IM-driven approach, we also plot the percentage of selecting the true model or a subset of the true model, where some or all important variables are selected while no unimportant variables are selected (Panel C). 

In both scenarios, the IM-driven approach performs well in terms of how frequently it selects the true model---similar to BIC but better than lasso and AIC.  When it misses the true model, the IM-driven approach tends to select a subset of the important variables in both scenarios.  Other approaches, on the other hand, are more likely to select unimportant variables.  This is especially the case in Scenario~2 with a larger number of variables, where these approaches miss important variables and incorrectly select unimportant variables (Panel E). Overall, the IM-based approach maintains the 95\% level for selecting the important variables without picking up unimportant variables (Panel C), a consequence of Theorem~\ref{thm:im.vs}. The other approaches, except for BIC and adaptive lasso, are generally far off the mark.  That BIC and adaptive lasso can reach above 95\% for selecting true model or subset of true model is due to their asymptotic variable selection consistency property, not because they are properly calibrated at any fixed $n$.  

The take-away message is that the variable selection procedure \eqref{eq:vs.rule} derived from our IM is surely competitive with some of the standard methods used in the literature.  Other ad hoc selection procedures can be used which may beat our IM-driven variable selection procedure with respect to some criterion, but these procedures will always fall short of providing uncertainty quantification about the model.

\begin{figure}
\begin{center}
\scalebox{0.5}{\includegraphics{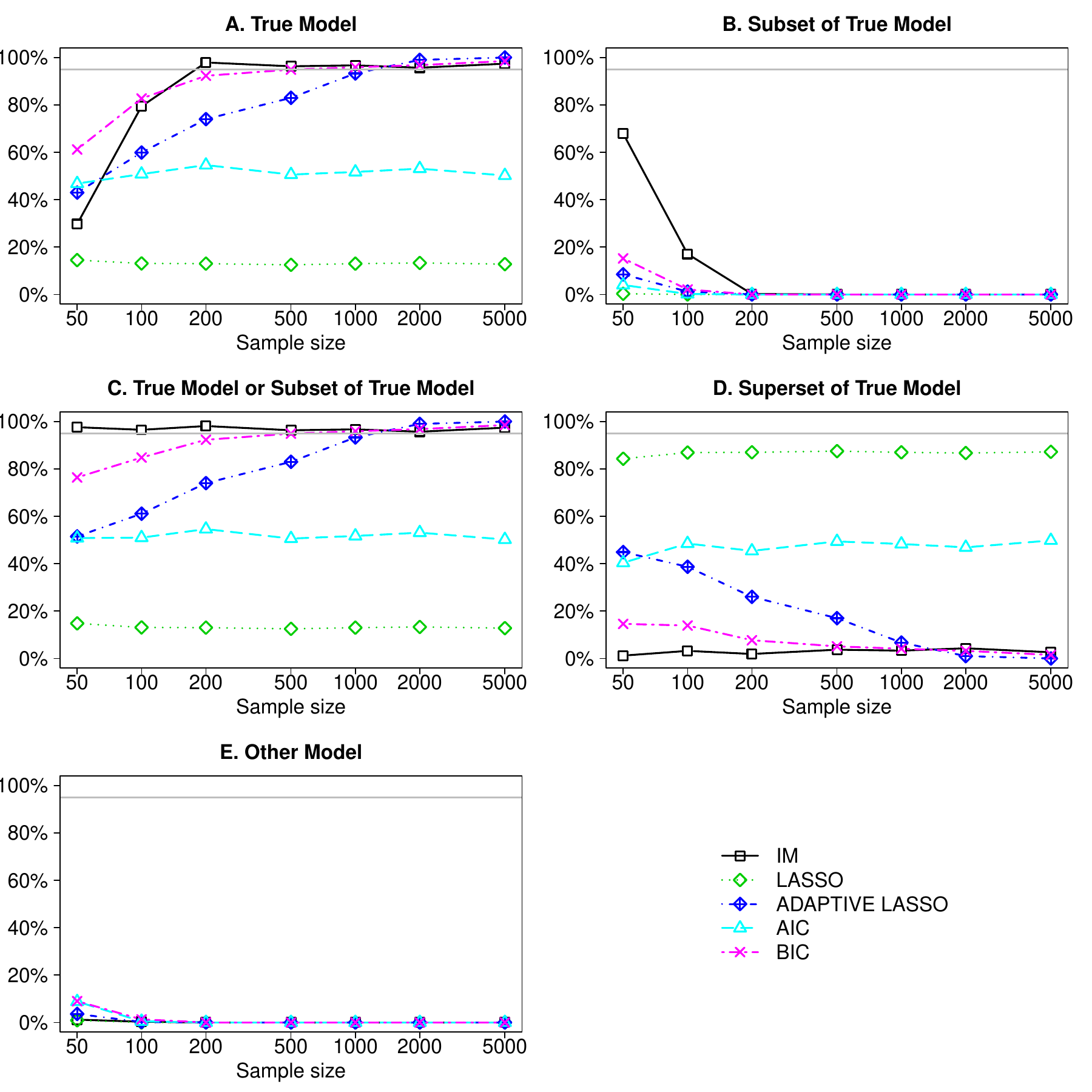}}
\end{center}
\caption{Percentage of selecting the true model (panel A), subset of the true model where some but not all important variables is selected (panel B), true model or subset of true model where some or all important variables are selected (panel C), superset if true model where all important variables and at least one unimportant variable are selected (panel D), and other model where at least one important variable is not selected and at least one unimportant variable is selected (panel E) for Scenario~1.}
\label{fig:scenario1}
\end{figure}

\begin{figure}
\begin{center}
\scalebox{0.5}{\includegraphics{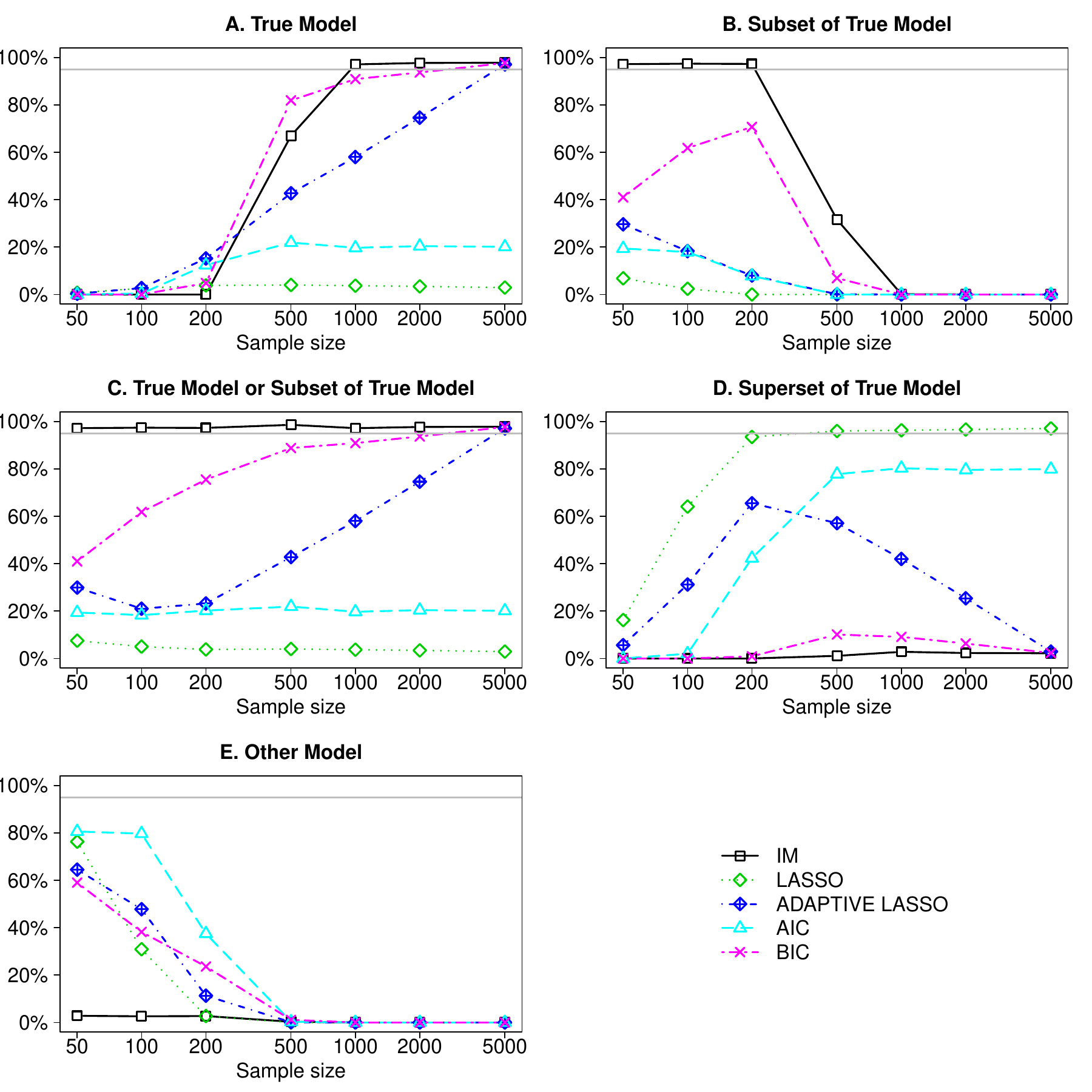}}
\end{center}
\caption{Same explanation as for Figure~\ref{fig:scenario1} but for Scenario~2.}
\label{fig:scenario2}
\end{figure}

\section{Discussion}
\label{S:discuss}

This paper introduces the concept of multiple simultaneous assertions in the IM framework, and develops a theory of optimal predictive random sets.  These general principles are then applied, in a regression setting, to provide a valid quantification of the uncertainty in the model.  To our knowledge, the IM approach is the only method known to satisfy this important property.  The IM's simultaneous validity property sheds light on the post-selection inference problem, and also leads to a variable selection procedure with desirable frequentist error rate control.  Our simulation study demonstrates that, overall, the proposed IM-based procedure performs as good or better than several standard methods, which suggests that the IM's emphasis on valid probabilistic uncertainty quantification does not come at the cost of decreased efficiency.  Extending the developments here for application in generalized linear model settings is a focus of ongoing research.  

The IM approach has already been applied to other problems that involve multiplicity, such as large-scale multinomial inference in genome wide association studies and multiple testing problems \citep{liu.xie.genome, liu.xie.imtest}.  We expect that the optimality considerations here, applied to those problems, would lead to some overall improvements.  Perhaps the most important question is how to extend the developments in this paper to the high-dimensional regression context.  %The $p \gg n$ problem has stimulated considerable research in the last decade.
%, e.g., \citet{candes.tao.2007}, \citet{zhang.huang.2008}, \citet{wasserman.roeder.2009}, \citet{huang.horowitz.wei.2010, huang.etal.laplace.2011}, \citet{johnson.rossell.2012}, \citet{wang.kim.li.2013}, and \citet{geer.etal.2014}.  
The basic principles of optimal predictive random sets developed here for simultaneous complex assertions are not specific to the $p < n$ case; however, the initial dimension reduction steps do not carry over directly to the $p \gg n$ case.  We expect that once the A-step can be completed for $p \gg n$, ideas similar to those presented here can be applied.  %This is a topic of ongoing work.  

\section*{Acknowledgements}

The authors thank the Editor, Associate Editor, and referees for their valuable comments on an earlier draft of this manuscript.  This work was partially supported by the U.S.~National Science Foundation: DMS--1007678, DMS--1208833, and DMS--1208841.  The authors also thank Professor Jan Hannig for suggesting a possible connection between our work and that of \citet{bbbzz2013}.

\appendix

\section{Proofs}
\label{S:proofs}

\subsection{Proof of Theorem~\ref{thm:optimal.simple}}
\label{proof:optimal.simple}

Proposition~1 in \citet{imbasics} shows that $\bel_x(A;\S) \leq \prob_U\{\UU_A(x)\}$, for any admissible $\S$ and any $x$.  By assumption, the collection $\{\UU_A(x): x \in \XX\}$ is nested, so it can be taken as the support of an admissible predictive random set.  In this case, since $\prob_\S$ satisfies \eqref{eq:natural}, we have that $\bel_x(A;\S) = \prob_U\{\UU_A(x)\}$ for all $x$.  Therefore, the belief function attains its upper bound for each $x$, hence, it is optimal.

\subsection{Proof of Theorem~\ref{thm:intersection}}
\label{proof:intersection}

Since $A_1$ and $A_2$ are simple assertions, the respective optimal predictive random sets $\S_1$ and $\S_2$ have focal elements given by $\UU_{A_1}(x)$ and $\UU_{A_2}(x)$, as $x$ ranges over $\XX$.  Without loss of generality, assume that the predictive random set $\T$ for $A=A_1 \cup A_2$ is admissible.  That is, assume $\T$ has a nested support $\TT$ and that $\prob_\T$ satisfies \eqref{eq:natural}.  Define 
\[ S_j(T) = \text{closure}\Bigl\{ \bigcap_{y: \UU_{A_j}(y) \supset T} \UU_{A_j}(y) \Bigr\} \quad j=1,2, \quad T \in \TT.  \]
Collect intersections of these sets, $\SS=\{S_1(T) \cap S_2(T): T \in \TT\}$.  Since $\TT$ is nested and the function $T \mapsto S_1(T) \cap S_2(T)$ is monotone, the collection $\SS$ is also nested.  Define a new predictive random set $\S$, supported on $\SS$, with the natural measure $\prob_\S$ as in \eqref{eq:natural}.  This predictive random set satisfies the conditions stated in the theorem, i.e., admissible and has focal elements as intersections of the respective optimal predictive random set focal elements.  We need to show that $T \subset \UU_A(x)$ if and only if $S(T) \subset \UU_A(x)$, where $S(T)=S_1(T) \cap S_2(T)$.  One direction is easy, since it is clear that $T \subset S(T)$.  For the other direction, we want to show that $T \subset \UU_A(x)$ implies $S(T) \subset \UU_A(x)$.  Since $\UU_{A_1}(x)$ and $\UU_{A_2}(x)$ are disjoint, and the union is $\UU_A(x)$, $T \subset \UU_A(x)$ implies that either $T \subset \UU_{A_1}(x)$ or $T \subset \UU_{A_2}(x)$.  By definition of $S_1(T)$ and $S_2(T)$, it follows that 
\[ S_1(T) \subset \UU_{A_1}(x) \quad \text{or} \quad S_2(T) \subset \UU_{A_2}(x). \]
In either case, $S(T) = S_1(T) \cap S_2(T)$ is contained in $\UU_A(x) = \UU_{A_1}(x) \cup \UU_{A_2}(x)$, which completes the argument that $\{T \subset \UU_A(x)\}$ and $\{S(T) \subset \UU_A(x)\}$ are equivalent.  Finally, since $T \subset S(T)$, we get $\prob_U(T) \leq \prob_U(S(T))$ for all $T \in \TT$ and, therefore, 
\[ \prob_\T\{\T \subset \UU_A(x)\} = \sup_{T: T \subset \UU_A(x)} \prob_U(T) \leq \sup_{T: S(T) \subset \UU_A(x)} \prob_U(S(T)) = \prob_\S\{\S \subset \UU_A(x)\}. \]
The left-hand side is $\bel_x(A;\T)$ and the right-hand side is $\bel_x(A;\S)$, and the inequality holds for all $x$, completing the proof.

\subsection{Proof of Theorem~\ref{thm:new.intersection}}
\label{proof:new.intersection}

The proof here is similar to that of Theorem~\ref{thm:intersection}.  Consider collection $\{A_j: j \in J\}$ of complex assertions, where each $A_j$ can be written as a union of disjoint simple assertions, i.e., $A_j = A_{j1} \cup A_{j2}$, where $A_{j1} \cap A_{j2} = \varnothing$ and the a-events $\UU_{A_{j1}}(\cdot)$ and $\UU_{A_{j2}}(\cdot)$ are nested.  By Theorem~\ref{thm:intersection}, we know that the optimal predictive random sets for $A_j$, $j \in J$, have focal elements $\SS_j = \{S_j(v): v \in V\}$, indexed by a set $V$, which are intersections of a-events.  That is, $S_j(v)$ is (the closure of) $\UU_{A_{j1}}(x_{j1}(v)) \cap \UU_{A_{j2}}(x_{j2}(v))$ for some $x_{j1}(v)$ and $x_{j2}(v)$.  

Without loss of generality, assume that the candidate predictive random set $\T$ for the assertion $A$ generated by $\{A_j: j \in J\}$ is admissible in the sense of Definition~\ref{def:admissible}.  Given a focal element $T \in \TT$ of $\T$, define 
\[ \tilde S_j(T) = \bigcap_{v: S_j(v) \supset T} S_j(v), \quad j \in J. \]
Next, set $\tilde S(T) = \bigcap_{j \in J} \tilde S_j(T)$ and define $\SS = \{\tilde S(T): T \in \TT\}$.  Now take $\S$ to have support $\SS$ and natural measure $\prob_\S$ as in \eqref{eq:natural}; this $\S$ satisfies the conditions of the theorem.  It remains to show that $\S$ is more efficient than $\T$.

As in the proof of Theorem~\ref{thm:intersection}, we need to show that $T \subset \UU_A(x)$ if and only if $\tilde S(T) \subset \UU_A(x)$ for each $x$ and $T \in \TT$.  By construction, $T \subset \tilde S(T)$, so one direction is handled.  For the other direction, assume that $T \subset \UU_A(x)$.  Then $T \subset \UU_{A_j}(x)$ for some $j \in J$ and, since $A_j$ splits as a disjoint union of simple assertions, we get that $T \subset \UU_{A_{j1}}(x)$ or $T \subset \UU_{A_{j2}}(x)$.  Then $\tilde S_j(T) \subset \UU_{A_{j1}} \cap \UU_{A_{j2}}(x)$ and, consequently, the no-bigger $\tilde S(T)$ must be a subset of $\UU_A(x)$.  Since $T \subset \UU_A(x)$ if and only if $\tilde S(T) \subset \UU_A(x)$, the claimed superiority of $\S$ to $\T$ follows just like in the last part of the proof of Theorem~\ref{thm:intersection}.

\subsection{Proof of Lemma~\ref{lem:belief}}
\label{proof:belief}

Recall that $\bel_x(A;\S)$ is defined as $\prob_\S\{\S \subset \UU_A(x)\}$.  Since $\S$ has the natural measure \eqref{eq:natural}, we can write, for any $b \in [0,1]$, 
\begin{align*}
\bel_x(A;\S) > b & \iff \prob_\S\{\S \subset \UU_A(x)\} > b \\
& \iff \sup_{r: S_r \subset \UU_A(x)} \prob_U(S_r) > b \\
& \iff \sup_{r: S_r \subset \UU_A(x)} (1-r) > b \\
& \iff \UU_A(x) \supset S_{1-b}.
\end{align*}
Therefore, we have that 
\[ \prob_{X|\theta}\{\bel_X(A;\S) > b\} = \prob_{X|\theta}\{\UU_A(X) \supset S_{1-b}\}, \quad \forall \; b \in [0,1], \]
which proves the claim, with $r=1-b$.

\subsection{Proof of Theorem~\ref{thm:balance.optimal}}
\label{proof:balance.optimal}

Take any predictive random set $\S$ as in Theorem~\ref{thm:new.intersection}, and let $S$ be a generic focal element.   Take any assertion $A$ generated by $\{A_j: j \in J\}$.  Define the core of $S$ as $S^\circ = \bigcap_{g \in \G_A} g S$; note that $S^\circ$ is balanced and satisfies $S^\circ \subset g S$ for all $g \in \G_A$.   Then, 
\[ \prob_{X|\theta}\{\UU_A(X) \supset S^\circ\} \geq \prob_{X|\theta}\{\UU_A(X) \supset g S\}, \quad \forall \; g \in \G_A, \quad \forall \; \theta \in A, \]
with strict inequality in general.  Maximin optimality requires that we choose the focal elements to maximize the minimum (over $g$) of the right-hand side of the above display.  However, we can clearly attain the upper bound above by taking the focal element $S$ equal to its core, i.e., balanced.  Therefore, balance implies maximin optimality.

\bibliographystyle{apalike}
\bibliography{/Users/rgmartin/Dropbox/Research/mybib}

\pagebreak

\begin{center}
{\LARGE Supplementary Material}
\end{center}

\section*{S \quad Measure-theoretic details}
%\label{S:measure}

\subsection*{S.1 \quad Random sets and the admissibility condition}
%\label{SS:sets}

\citet{molchanov2005} gives a comprehensive treatment of the theory of random sets.  Our goal here is present the minimal amount of technical details necessary to understand our analysis involving predictive random sets.  To start, the standard theory of random sets focuses on the case of closed random sets, i.e., random sets whose values are closed sets (with probability~1).  Write $\SS$ for the support of our random set $\S$, a collection of subsets of the space $\UU$; we assume that $\SS$ contains both $\varnothing$ and $\UU$.  Let $\UU$ be a separable metric space and take $\U$ to be a $\sigma$-algebra of subsets that contains all the closed subsets of $\UU$.  Assume that each set, or focal element, $S \in \SS$ is closed, relative to the topology on $\UU$.   

To define the random set $\S$, consider a probability space $(\Omega, \A, \prob)$, and let $\S$ be mapping from $\Omega$ to $\SS$ which is measurable in the sense that 
\[ \{\omega: \S(\omega) \cap K \neq \varnothing\} \in \A, \quad \text{for all compact $K \subseteq \UU$}. \]
We define the distribution $\prob_\S$ of $\S$ as the push-forward measure $\prob \S^{-1}$.  Also, for compact $K$, the event $\{\S \subset K\}$ is measurable, so the probability $\prob_\S\{\S \subset K\}$ in \eqref{eq:natural} and also below makes sense.  Moreover, since $\UU$ is separable and $\S$ is closed, the indicator stochastic process $\{I_\S(u): u \in \UU\}$ is separable and, by Proposition~4.10 in \citet{molchanov2005}, the distribution $\prob_\S$ is determined by probabilities assigned to the events $\{\S \subset K\}$, and can be extended to include arbitrary sets $K$.

An important class of examples are the predictive random sets described in Corollary~1 of \citet{imbasics}; the ``default'' predictive random set that has been frequently used in the IM literature, as well as the optimal predictive random set employed in this paper, are members of this class.  In particular, start by taking the probability space $(\Omega, \A, \prob)$ to be $(\UU, \U, \prob_U)$.  Next, for a continuous function $h: \UU \to \RR$, define the set-valued mapping $\psi_h$ on $\UU$ as 
\[ \psi_h(u) = \{u' \in \UU: h(u') \leq h(u)\}, \quad u \in \UU. \]
Then the focal elements $S \in \SS$ are closed level sets of the function $h$, and if $\U$ is rich enough to contain all the closed sets, then $\psi_h$ is a measurable function and, consequently, $\S = \psi_h(U)$, for $U \sim \prob_U$, is a closed random set.  

In the IM context, as a consequence of the results in \citet{imbasics}, we focus only on predictive random sets with nested support $\SS$, where nested means that, for any two focal elements, one is a subset of the other.  Such supports can, and usually are, constructed as in the small example above, but requiring that the function $h$ acts like a metric, i.e., $h$ has a unique minimizer and that $h(u)$ is increasing as $u$ moves further away from that minimizer.  The default predictive random set, for the case $\UU=[0,1]$, takes $h(u) = |u-0.5|$, which is of the form just described.  Using the terminology in \citet{shafer1987}, we could call a predictive random set with nested support \emph{consonant} and, in addition to the admissibility properties demonstrated in \citet{imbasics}, consonant random sets have the simplest distributional properties.  

In our restriction to admissible predictive random sets, as described in Definition~\ref{def:admissible}, there is a more pressing concern, namely, does there exist a predictive random set $\S$, supported on a given nested collection $\SS$ of closed $\prob_U$-measurable subsets, such that 
\[ \prob_\S\{\S \subset K\} = \sup_{S \in \S: S \subset K} \prob_U(S). \]
Specifically, while the right-hand side above is a well-defined quantity, it is not obvious that there is a random set $\S$ with distribution $\prob_\S$ that satisfies the above equality.  However, the famous Choquet capacity theorem \citep[][Theorem~1.13]{molchanov2005} can be applied to show existence of such an $\S$.  In particular, from the above display, one can identify the condition that the corresponding capacity function of $\S$ must satisfy and, from the close connection with the probability measure $\prob_U$, the required upper semi-continuity and completely alternating properties can be checked relatively easily.

\subsection*{S.2 \quad Measurability assumptions}
%\label{SS:measurable}

In addition to measurability of set-valued mappings, as discussed above, there are questions about some more familiar forms of measurability, pertaining to the a-events $\UU_A(x)$ for a given assertion $A \subseteq \Theta$.  Recall that the auxiliary variable space $\UU$ is equipped with a $\sigma$-algebra $\U$ of $\prob_U$-measurable subsets, assumed to contain all the closed sets.  Next, equip the sample space $\XX$ with a $\sigma$-algebra $\X$ and a $\sigma$-finite measure $\mu$, where $\prob_{X|\theta} \ll \mu$ for each $\theta$.  Then the two key measurability assumptions are:
\begin{itemize}
\item $\UU_A(x) \in \U$ for all relevant $A$ and for $\mu$-almost all $x$, and 
\vspace{-2mm}
\item $\{x: \UU_A(x) \supset S\} \in \X$ for all focal elements $S \in \SS$.
\end{itemize}
General sufficient conditions can be given based properties of the association mapping $x=a(\theta,u)$ and of the assertion $A$.  It is relatively easy to check the above conditions directly in a particular example and, moreover, the conditions might fail only in non-standard problems.  %{\color{red} Should we do a simple example?}

\end{document}